\newtheorem{thm}{Theorem}[section]
\newtheorem{lemma}[thm]{Lemma}
\newtheorem{prop}[thm]{Proposition}
\newtheorem{coroll}[thm]{Corollary}
\theoremstyle{definition}
\newtheorem{rem}[thm]{Remark}
\newtheorem{exam}[thm]{Example}
\newtheorem{notation}[thm]{Notation}
\newtheorem*{acknow}{Acknowledgements}
\newcommand{\R}{{\mathbb{R}}}
\newcommand{\F}{{\mathbb{F}}}
\newcommand{\Z}{{\mathbb{Z}}}
\newcommand{\C}{{\mathbb{C}}}
\newcommand{\cA}{{\mathcal{A}}}
\newcommand{\cL}{{\mathcal{L}}}
\newcommand{\cP}{{\mathcal{P}}}
\newcommand{\cQ}{{\mathcal{Q}}}
\newcommand{\cR}{{\mathcal{R}}}
\newcommand{\fc}{{:\ }}
\newcommand{\ol}{\overline}
\newcommand{\wt}{\widetilde}
\newcommand{\tb}{\textbf}
\DeclareMathOperator{\IM}{Im}
\DeclareMathOperator{\Crit}{Crit}
\DeclareMathOperator{\id}{id}
\DeclareMathOperator{\chr}{char}
\DeclareMathOperator{\pt}{pt}
\DeclareMathOperator{\Ham}{Ham}
\DeclareMathOperator{\Aut}{Aut}
\DeclareMathOperator{\Spec}{Spec}
\DeclareMathOperator{\Pin}{Pin}
\begin{document}

\title{On the Lagrangian Hofer geometry of Clifford tori}

\author{Frol Zapolsky}


\setcounter{tocdepth}{3}

\renewcommand{\labelenumi}{(\roman{enumi})}

\maketitle


\begin{abstract}
  We show that the space of Lagrangians which are Hamiltonian isotopic to the Clifford torus in a complex projective space or in the four-dimensional quadric, taken with Chekanov's Lagrangian Hofer metric, contains a quasi-isometric copy of the real line, and in particular has infinite diameter.
\end{abstract}

\section{Main result}

The celebrated Hofer distance on the Hamiltonian group $\Ham(M,\omega)$ of a symplectic manifold $(M,\omega)$ is defined by
$$d_{\mathrm H}(\phi,\psi)=\|\phi^{-1}\psi\| \qquad\text{for }\phi,\psi \in \Ham(M,\omega)\,,$$
where
$$\|\phi\| = \inf_{\phi=\phi_H}\int_0^1\Big(\max_MH_t - \min_MH_t\Big)\,dt$$
is the Hofer norm of $\phi \in \Ham(M,\omega)$, $\phi_H$ being the time-$1$ map of the Hamiltonian flow $\phi_H^t$ of a given Hamiltonian $H$. It is a group pseudo-norm in the sense of \cite{Burago_Ivanov_Polterovich_Conj_Invt_Norms_Groups_Geom_Origin}.

The Hofer distance has been a central theme in symplectic geometry ever since its inception in \cite{Hofer_Top_Props_Symplectic_Maps}. It is easily shown to be a biinvariant pseudometric. However, its nondegeneracy is quite a nontrivial property, and was established in increasing levels of generality in \cite{Hofer_Top_Props_Symplectic_Maps}, \cite{Polterovich_Symplectic_Displacement_Energy_Lag_Submfds}, \cite{Lalonde_McDuff_Geom_Sympl_Energy}. We will not attempt to cover the extensive literature on the subject, instead referring the reader to \cite{Polterovich_Geom_Group_Sympl_Diffeos} for basics on Hofer geometry.

The goal of this note is a result about the Lagrangian analog of the Hofer distance, introduced by Chekanov in \cite{Chekanov_Invt_Finsler_Metrics_Space_Lagrangians}. Let $(M,\omega)$ be a connected symplectic manifold, and let $\Ham(M,\omega)$ be the group generated by compactly supported Hamiltonians. Let $L\subset M$ be a closed connected Lagrangian submanifold. Throughout we denote by
$$\cL(L) = \{\phi(L)\,|\,\phi\in\Ham(M,\omega)\}$$
the collection of Lagrangians in $M$ which are Hamiltonian isotopic to $L$. For $L',L'' \in \cL(L)$ put
$$d_{\mathrm{LH}}(L',L'') = \inf\{\|\phi\|\,|\,\phi(L')=L''\}\,.$$
This is clearly a pseudometric on $\cL(L)$ which is invariant under the obvious action of $\Ham(M,\omega)$. Chekanov showed in \cite{Chekanov_Invt_Finsler_Metrics_Space_Lagrangians} that it is nondegenerate provided $M$ is geometrically bounded, and also furnished a counterexample which shows that this condition cannot be dropped. Note that, in contrast, the Hofer distance on the Hamiltonian group is \emph{always} nondegenerate.

A basic question regarding any metric space is whether it has infinite diameter. In Hofer geometry, thanks to the nature of symplectic topology, such a result has always been a consequence of a stronger statement, namely that there is a quasi-isometric copy of the real line in the relevant space. Our purpose here is to prove the following.
\begin{thm}\label{thm:main}
  Let $(M,\omega)$ be either $S^2\times S^2$ with the standard monotone symplectic form or $\C P^n$ with the Fubini--Study form, and let $T\subset M$ be the Clifford torus. Then $(\cL(T),d_{\mathrm{LH}})$ contains a quasi-isometric copy of $\R$. More precisely, there is a family $(L_t)_{t\in \R}\subset \cL(T)$ and a constant $D>0$ such that for all $t,s \in \R$ we have
  $$\tfrac 12 |s-t| - D \leq d_{\mathrm{LH}}(L_s,L_t) \leq |s-t|\,.$$
  In particular $\cL(T)$ has infinite diameter.
\end{thm}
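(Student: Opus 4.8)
The plan is to realize the family $(L_t)$ as the orbit of the Clifford torus under a single autonomous Hamiltonian flow, obtain the upper bound directly from the definition of $d_{\mathrm{LH}}$, and extract the lower bound from the Floer theory of $T$ by means of Lagrangian spectral invariants.

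For the upper bound I would fix an autonomous Hamiltonian $H\colon M\to\R$, generating the flow $\phi_H^t$, normalized so that $\osc(H)=\max_M H-\min_M H=1$, and set $L_t=\phi_H^t(T)$. Since $\phi_H^{t-s}$ carries $L_s$ onto $L_t$ and an autonomous flow run for time $|t-s|$ has Hofer length at most $|t-s|\cdot\osc(H)=|t-s|$, the bound $d_{\mathrm{LH}}(L_s,L_t)\le|t-s|$ is immediate. The Hamiltonian $H$ must be chosen with care: its flow should genuinely push $T$ off itself, so $H$ cannot be a function of the toric action coordinates alone (those preserve the central fibre), and the flow should be aperiodic on the orbit of $T$, so that the $L_t$ are pairwise distinct. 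A shear- or twist-type $H$, in the spirit of Khanevsky's autonomous Hamiltonians on $S^2$ adapted to the two-torus fibration, is the natural candidate.

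For the lower bound the engine is the nonvanishing of $HF(T)$ and the Lagrangian spectral invariants it supports. First I would recall that $T$ is monotone with Landau--Ginzburg superpotential having nondegenerate critical points --- in $\C P^n$ the potential $z_1+\dots+z_n+q/(z_1\cdots z_n)$, in the quadric $\C P^1\times\C P^1$ the analogous one --- so that $HF(T;R)\ne0$ for a suitable Novikov coefficient ring $R$ and local system; by Hamiltonian invariance $HF(L_s,L_t)\ne0$ for all $s,t$. I would then invoke the Lagrangian spectral invariants $c(\alpha;L_s,L_t)$ attached to nonzero classes $\alpha\in HF(L_s,L_t)$, and assemble from the unit the spectral pseudo-metric $\gamma(L_s,L_t)=c(\mathbf 1;L_s,L_t)+c(\mathbf 1;L_t,L_s)$. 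The standard energy estimate for these invariants gives $\gamma\le d_{\mathrm{LH}}$, so it suffices to bound $\gamma$ from below.

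The crux is the estimate $\gamma(L_s,L_t)\ge\tfrac12|s-t|-D$, and here I would exploit the \emph{Lagrangian control} property of the invariants: whereas the Hofer norm registers the oscillation of $H$ over all of $M$, the numbers $c(\cdot;L_s,L_t)$ see only the behaviour of the generating Hamiltonian \emph{along the torus}. Identifying $c(\mathbf 1;T,\phi_H^t(T))$ with the spectral invariant of the Hamiltonian generating $\phi_H^t$ and applying Lagrangian control, this invariant is pinched between the time-integrals of $\min_{L_\tau}H$ and $\max_{L_\tau}H$, hence grows linearly in $t$ at a rate set by the average of $H$ over the moving torus. Because this average is decoupled from the mean normalization of $H$ on $M$ that controls $\osc(H)$, one obtains honest linear growth; arranging $\osc(H)=1$ with torus average $\tfrac12$ up to sign yields slope $\tfrac12$, while the bounded defect of $\gamma$ under the triangle inequality is absorbed into $D$. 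The hard part, and the step carrying all the geometric content, will be precisely this linear-growth computation: it requires genuine control of the monotone Floer theory of $T$ in the presence of disk bubbling, the correct choice of coefficients and local system to keep $HF(T)\ne0$, and the concrete action estimate for the twist Hamiltonian. By contrast the upper bound and the formal properties of the spectral metric are routine.
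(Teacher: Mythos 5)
Your upper bound is fine and agrees with the paper's. The lower bound, however, has a fatal gap, and it sits exactly where you locate "the hard part": the claimed estimate $\gamma(L_s,L_t)\geq\tfrac12|s-t|-D$ for the Lagrangian spectral pseudo-metric of $T$ is actually \emph{false} for these manifolds. The reason is the quasi-morphism property of the ambient spectral invariants, which is the very tool one must use here. By the quantum action (unit) property, $\ell^{\cQ}_T\leq c^\F([M],\cdot)$ (resp.\ $\leq c^\F(e^\F_\pm,\cdot)$ on $S^2\times S^2$), and by Entov--Polterovich these closed-string invariants $c$ are quasi-morphisms on $\wt\Ham(M,\omega)$; since a quasi-morphism lies within $D(c)$ of its homogenization $c_\infty$, and $c_\infty(\wt\phi)+c_\infty(\wt\phi^{-1})=0$, one gets $c(\wt\phi)+c(\wt\phi^{-1})\leq 2D(c)$ for all $\wt\phi$. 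Combined with positivity, this pinches $0\leq \ell^\cQ_T(G)+\ell^\cQ_T(\ol G)\leq \mathrm{const}$ uniformly in $G$, so your $\gamma$ is a \emph{bounded} function on $\cL(T)\times\cL(T)$: the chain $\gamma\leq d_{\mathrm{LH}}$ can never produce linear growth, nor even infinite diameter. Concretely, the Lagrangian-control mechanism you invoke cancels in the pair invariant: if $H|_T\equiv c$ then $\ell_T(tH)=tc$ but $\ell_T(\overline{tH})=-tc$, giving $\gamma=0$ from this Hamiltonian; and one cannot do better by other choices, because $d_{\mathrm{LH}}$ is an infimum over \emph{all} $G$ with $\phi_G(L_s)=L_t$, about whose restriction to $T$ Lagrangian control says nothing. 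Underlying both symptoms is the same structural issue: $\ell^\cQ_T$ is a function on $\wt\Ham$ (or on Hamiltonians), not on $\cL(T)$, and it does not vanish on the stabilizer $\{\psi:\psi(T)=T\}$ (e.g.\ a toric Hamiltonian with $G|_T\equiv1$ preserves $T$ yet has homogenized invariant $1-\langle G\rangle\neq0$), so it cannot directly bound the quotient pseudometric from below.

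What is missing is the paper's key idea: take the \emph{difference} of two Hofer--Lipschitz quasi-morphisms attached to two \emph{disjoint auxiliary Lagrangians} $K$ and $L$, neither of which is $T$ --- namely $K=\R P^n$, $L=T_{\mathrm{Ch}}$ (the Chekanov lift) in $\C P^n$, and $K=T_{\mathrm{ex}}$, $L=\ol\Delta$ in $S^2\times S^2$ --- with $H|_K\equiv1$, $H|_L\equiv0$. Lagrangian control at $K$ and at $L$ gives $\mu_K(\phi_{tH})=t-t\langle H\rangle$ and $\mu_L(\phi_{tH})=-t\langle H\rangle$, so the mean-normalization terms cancel in $\nu=\mu_K-\mu_L$ and $\nu(\phi_H^t)=t$: this is where the honest linear growth comes from, not from $T$ alone. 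The genuinely nontrivial step is then that \emph{both} $\mu_K$ and $\mu_L$ coincide with homogenized spectral invariants of the Clifford torus $T$ itself, taken with two \emph{different} local systems (trivial vs.\ twisted, Corollary \ref{cor:Ham_qm_coincides_homog_Lag_sp_invt}); since two such invariants of the same Lagrangian differ by at most $\tau\cdot\dim T$ on any Hamiltonian preserving $T$ (Lemma \ref{lem:boundedness_Lag_sp_invt}), the homogeneous quasi-morphism $\nu$ vanishes on the stabilizer of $T$ and therefore descends to the desired lower bound $\tfrac12|\nu(g)-\nu(g')|-D(\nu)\leq d_{\mathrm{LH}}(g(T),g'(T))$. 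Your proposal, built on a single Lagrangian and a single local system, has no counterpart of this cancellation-on-the-stabilizer step, and no way to manufacture one.
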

\noindent To the best of our knowledge, this is the first instance of such a result for a positively monotone Lagrangian in a closed symplectic manifold, which is not obtained through stabilization. It is plausible that the same result holds for other Lagrangians in similar settings, however finding examples is quite challenging. Previous results regarding the infinity of the diameter of the Lagrangian Hofer distance include \cite{Oh_Sympl_Top_Geom_Action_Fcnl},  \cite{Khanevsky_Hofers_Metric_Space_Diameters}, \cite{Zapolsky_Hofer_Geom_Weakly_Exact_Lags}, \cite{Seyfaddini_Unboundedness_Lag_Hofer_Dist_Euclidean_Ball}, \cite{Usher_Hofer_Geom_Cot_Fibers}, \cite{Masatani_Family_Lag_Submfds_Bidisks_Lag_Hofer_Metric}, \cite{Gong_Unbounded_Lagrangian_Spectral_Norm_Wrapped_Floer_Coh}, \cite{Trifa_Hofer_Dist_Lag_Links_Disc}, \cite{Dawid_Hofer_Geom_A3_Configs}.

\begin{acknow}This paper arose as a result of collaboration with Michael Khanevsky. I am grateful to him for many stimulating discussions and for explaining the fundamental technique of \cite{Khanevsky_Hofers_Metric_Space_Diameters}. I would like to warmly thank Egor Shelukhin for his interest in this project, for valuable discussions, and for directing my attention to a higher-dimensional analog of Chekanov's torus in $\C P^2$.
\end{acknow}

\section{Proof}

\subsection{Quasi-morphisms}

As usual, the upper bound in Theorem \ref{thm:main} is a straightforward consequence of definitions. The fundamental idea in the proof of the lower bound is to use quasi-morphisms on the Hamiltonian group which are Lipschitz with respect to the Hofer distance \cite{Khanevsky_Hofers_Metric_Space_Diameters}. Let us present the relevant discussion in an abstract case for the convenience of the reader.

Recall that a quasi-morphism on a group $G$ is any function $\mu \fc G\to\R$ such that
$$D(\mu):=\sup_{a,b\in G}|\mu(ab) - \mu(a) - \mu(b)| < \infty\,.$$
The quantity $D(\mu)$ is called the defect of $\mu$. We say that $\mu$ is homogeneous if in addition
$$\mu(a^n) = n\mu(a)\qquad\text{for all }a\in G\,,n\in\Z\,.$$
Given a quasi-morphism $\mu$, there exists a unique homogeneous quasi-morphism $\mu_\infty$ which lies at a bounded distance from $\mu$, that is $\sup_{a\in G}|\mu(a)-\mu_{\infty}(a)| < \infty$. This $\mu_\infty$ is called the homogenization of $\mu$ and it is given by the formula
$$\mu_\infty(a) = \lim_{n\to\infty}\frac{\mu(a^n)}{n}\,.$$
See \cite{Calegari_scl} for basics on quasi-morphisms.

Let us now see how quasi-morphisms yield lower bounds on certain pseudometrics. Let $G$ be a group equipped with a metric $d$, and let $H<G$ be a subgroup. Consider the associated homogeneous space $G/H$ with the induced pseudometric
$$\ol d(gH,g'H) = \inf\{d(gh,g'h')\,|\,h,h'\in H\}\,.$$
Now assume we are given a quasi-morphism $\nu \fc G\to \R$ which is $\lambda$-Lipschitz with respect to $d$ for some $\lambda > 0$, that is for all $g,g' \in G$:
$$|\nu(g)-\nu(g')|\leq \lambda d(g,g')\,.$$
Assume, moreover, that $\nu|_H\equiv 0$. For $g,g'\in G$ and $h,h' \in h$ we then have
\begin{align*}
  \lambda d(gh,g'h') &\geq |\nu(gh) - \nu(g'h')|\\
                     &\stackrel{*}{=} |(\nu(g) - \nu(g')) + (\nu(gh)-\nu(g)-\nu(h)) - (\nu(g'h') - \nu(g') - \nu(h')|\\
                     &\geq |\nu(g) - \nu(g')| - |\nu(gh)-\nu(g)-\nu(h)| - |\nu(g'h') - \nu(g') - \nu(h')|\\
                     &\geq |\nu(g) - \nu(g')| -2D(\nu)\,,
\end{align*}
where $\stackrel{*}{=}$ holds because $\nu(h) = \nu(h') = 0$. Taking the infimum over $h,h' \in H$ we deduce:
$$\frac 1 \lambda |\nu(g)-\nu(g')| - \frac{2D(\nu)}{\lambda}\leq \ol d(gH,g'H)\,.$$

Theorem \ref{thm:main} is then an easy consequence of the following result:
\begin{prop}\label{prop:qm}
  Let $(M,\omega)$ and $T$ be as in Theorem \ref{thm:main}. Then there exist a quasi-morphism $\nu \fc \Ham(M,\omega)\to \R$ which is $2$-Lipschitz with respect to the Hofer distance, and a Hamiltonian $H \in C^\infty(M,[0,1])$ such that
  \begin{itemize}
    \item $\nu(\phi^t_H) = t$ for all $t\in\R$;
    \item $\nu(\psi)=0$ for all $\psi$ satisfying $\psi(T) = T$.
  \end{itemize}
\end{prop}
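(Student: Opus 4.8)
The plan is to build $\nu$ from Floer-theoretic spectral invariants attached to the Clifford torus itself, exploiting the fact that $T$ is a \emph{stem} in the sense of Entov--Polterovich — the central fibre of the toric moment map on $M$ — and is therefore superheavy with respect to every homogeneous quasi-morphism arising from quantum homology. The Lipschitz constant $2$ in the statement is a hint about the shape of the construction: I expect $\nu$ to be a \emph{difference} of two quasi-morphisms, each $1$-Lipschitz with respect to $d_{\mathrm H}$, so that both the additivity defects and the ``ambient'' contribution on the stabilizer cancel. The algebraic input I would record first is that the monotone Clifford torus carries several rank-one local systems $\rho$ with $HF(T;\rho)\neq 0$, corresponding to the critical points of the Landau--Ginzburg superpotential $z_1+\dots+z_n+(z_1\cdots z_n)^{-1}$ on $(\C^\times)^n$ for $\C P^n$ (there are $n+1$ of them) and of $z_1+z_1^{-1}+z_2+z_2^{-1}$ for $S^2\times S^2$ (four of them). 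In each case there are two distinct local systems $\rho_1\neq\rho_2$ whose Floer homology is nonzero and carries a distinguished field factor. For $S^2\times S^2$ this may alternatively be packaged ambiently, since $QH(S^2\times S^2)$ splits as a product of two fields and yields the two Entov--Polterovich quasi-morphisms $\mu_\pm$; but the Lagrangian formulation is uniform across both manifolds.

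For each critical local system $\rho_i$ I would feed the corresponding field factor of $HF(T;\rho_i)$ into the Entov--Polterovich construction performed relative to $T$, using the Lagrangian spectral invariants of Leclercq--Zapolsky. Writing $\ell_{\rho_i}(\phi)=\ell_{\rho_i}(1_{\rho_i},\phi)$ for the invariant of the unit, the field structure of $HF(T;\rho_i)$ bounds the defect, so that the homogenization
$$\mu_{\rho_i}(\phi)=\lim_{k\to\infty}\frac{\ell_{\rho_i}(\phi^k)}{k}$$
is a homogeneous quasi-morphism on $\widetilde{\Ham}(M,\omega)$; for these monotone manifolds it descends to $\Ham(M,\omega)$, and it is $1$-Lipschitz with respect to $d_{\mathrm H}$ because normalized spectral invariants are $1$-Lipschitz and homogenization preserves this. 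I then set $\nu=\mu_{\rho_1}-\mu_{\rho_2}$, a homogeneous quasi-morphism which is $2$-Lipschitz by the additivity of Lipschitz constants.

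The vanishing of $\nu$ on the stabilizer is the heart of the matter and the step I expect to be the main obstacle. If $\psi(T)=T$ I must show $\mu_{\rho_1}(\psi)=\mu_{\rho_2}(\psi)$. The organizing observation is that the Hamiltonians generating a path in the identity component of $\mathrm{Stab}(T)$ are precisely those with constant restriction to $T$: the condition $\phi_F^t(T)=T$ for all $t$ forces $X_{F_t}$ to be tangent to the Lagrangian $T$, which since $T$ is Lagrangian is equivalent to $dF_t|_{TT}=0$, i.e.\ $F_t|_T\equiv c(t)$. For such $\psi$ the Lagrangian spectral invariant $\ell_{\rho_i}(\psi)$ ought to be computed entirely from the restriction of the action functional to $T$, where the relevant generator contributes the constant history $\int_0^1 c(t)\,dt$, and should therefore be \emph{independent of the local system}: $\rho_i$ enters only through Floer strips that leave $T$, and these do not contribute when $T$ is preserved. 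This would give $\nu(\psi)=0$ on the identity component, and I would extend to all of $\mathrm{Stab}(T)$ using conjugation-invariance of the homogeneous quasi-morphism together with a description of the finitely many components of $\mathrm{Stab}(T)$ by symmetries of $T$ that exchange $\rho_1$ and $\rho_2$ compatibly. Making precise the slogan ``the spectral invariant sees only data on $T$'' — controlling, uniformly under iteration, the Floer complex of a Hamiltonian that is constant on $T$ but arbitrary elsewhere — is where the genuine work lies.

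Finally I would produce $H$. For autonomous $H$ the value $\mu_{\rho_i}(\phi_H)$ is computed by a partial symplectic quasi-state $\zeta_{\rho_i}$ of $(T,\rho_i)$ evaluated on $H$, so $\nu(\phi_H)=\zeta_{\rho_1}(H)-\zeta_{\rho_2}(H)$. Because $T$ is a stem, both quasi-states pin $H$ to its value on $T$ whenever $H|_T$ is constant, so such $H$ give $\nu(\phi_H)=0$; to obtain a nonzero value I must instead choose $H$ that genuinely moves $T$ and that the two local systems weight differently. I would take $H\in C^\infty(M,[0,1])$ adapted to the moment map and asymmetric between the Floer-theoretic chambers detected by $\rho_1$ and $\rho_2$ (for $S^2\times S^2$, essentially a function separating the two spherical factors). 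Since $\nu$ is $2$-Lipschitz and $\|\phi_H\|\le 1$ for $H$ valued in $[0,1]$, we have $0\le\nu(\phi_H)\le 2$, and by rescaling $H$ inside $[0,1]$ and an intermediate-value argument I can arrange $\nu(\phi_H)=1$ exactly. Homogeneity of $\nu$ on the abelian flow $\{\phi_H^t\}$ then upgrades this to $\nu(\phi_H^t)=t$ for all $t$. Setting $L_t=\phi_H^t(T)$ and applying the abstract estimate of the previous subsection with $\lambda=2$ yields the lower bound of Theorem \ref{thm:main}, completing the reduction of the theorem to Proposition \ref{prop:qm}.
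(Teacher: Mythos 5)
Your construction fails at two essential points, the first of which is fatal for $\C P^n$. You propose $\nu=\mu_{\rho_1}-\mu_{\rho_2}$ where $\rho_1,\rho_2$ are distinct critical local systems (over $\C$) on the Clifford torus itself. But by the paper's Proposition \ref{prop:Ham_sp_invt_QMs_Lag_sp_invt} applied with $A=[M]$ --- note $[M]\bullet[T]=[T]$ holds automatically, and $c^{\C}_+=c^{\C}([M],\cdot)$ \emph{is} a quasi-morphism on $\wt\Ham(\C P^n)$ --- the invariant $\ell^{\rho}_T$ lies at bounded distance from $c^{\C}_+$ for \emph{every} $\C$-local system $\rho$ with $QH_*(T;\rho)\neq 0$. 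Hence all your homogenizations coincide: $\mu_{\rho_1}=\mu_{\rho_2}=\mu^{\C}$, and $\nu\equiv 0$. The mechanism you are relying on only exists when the ambient quantum homology splits into several field factors: in $S^2\times S^2$ the local systems $\cQ_{++},\cQ_{+-}$ pick out the distinct idempotents $e^{\F_3}_\pm$, so your recipe does reproduce $\mu^{\F_3}_+-\mu^{\F_3}_-$; but $QH_*(\C P^n;\C)$ is a single field, and local systems on $T$ over a fixed field cannot separate quasi-morphisms. This is precisely why the paper uses two different coefficient \emph{fields} ($\F_2$ and $\C$) in the $\C P^n$ case, distinguished by two auxiliary Lagrangians.

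The second gap is the detection property, and it persists even for $S^2\times S^2$ where your $\nu$ is the correct quasi-morphism. You correctly observe that any $H$ with $H|_T$ constant gives $\nu(\phi_H)=0$, but your remedy --- choose $H$ ``asymmetric between the Floer-theoretic chambers'' and run an intermediate-value argument --- is circular: the rescaling can only produce $\nu(\phi_H)=1$ if you already know that $\nu(\phi_{H'})\geq 1$ for some $H'\in C^\infty(M,[0,1])$, and nothing in your construction evaluates $\nu$ on a single element (for $\C P^n$ no such $H'$ exists, since your $\nu$ vanishes). The missing idea is the paper's \emph{double identification}: each of the two quasi-morphisms is expressed both as the homogenized Lagrangian spectral invariant of $T$ (with suitable local systems/fields, to kill the stabilizer) \emph{and} as the homogenized Lagrangian spectral invariant of an auxiliary Lagrangian --- $\R P^n$ and $T_{\mathrm{Ch}}$ in $\C P^n$, $T_{\mathrm{ex}}$ and $\ol\Delta$ in $S^2\times S^2$, disjoint from each other (Corollary \ref{cor:Ham_qm_coincides_homog_Lag_sp_invt}) --- so that Lagrangian control applied to $H|_K\equiv 1$, $H|_L\equiv 0$ yields $\nu(\phi^t_H)=t$ exactly. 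Finally, your treatment of the stabilizer is also incomplete: your tangency argument only covers elements connected to the identity \emph{through} the stabilizer, i.e.\ time-one maps of flows preserving $T$, and the proposed extension via ``finitely many components'' of the stabilizer is not something you can describe or control. The paper's Lemma \ref{lem:boundedness_Lag_sp_invt} avoids this entirely: it bounds $|\ell^{\cQ}_T(G)-\ell^{\cQ'}_T(G)|$ for an \emph{arbitrary} $G$ with $\phi_G(T)=T$ by a spectrality and Viterbo--Maslov index-window argument, with no assumption that the whole flow preserves $T$.
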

\noindent Let us prove Theorem \ref{thm:main}, assuming the proposition.
\begin{proof}[Proof of Theorem \ref{thm:main}]
  Set $G = \Ham(M,\omega)$ and $H = \{\psi \in G\,|\,\psi(T) = T\}$. It then follows that
  $$G/H \to \cL(T)\,,\quad gH\mapsto g(T)$$
  is a well-defined bijection. Moreover, letting $d$ be the Hofer distance on $G$, we see that under this bijection the induced pseudometric $\ol d$ on $G/H$ coincides with the Lagrangian Hofer distance on $\cL(T)$. It follows that the discussion at the beginning of this section applies to the present situation. Namely, we can deduce that for all $g,g' \in G$ we have
  $$\tfrac12|\nu(g) - \nu(g')| - D(\nu) \leq d_{\mathrm{LH}}(g(T),g'(T))\,.$$
  Now set $L_t=\phi^t_H(T)$. Then
  $$\tfrac 12|s-t| - D(\nu) = \tfrac12|\nu(\phi^s_H) - \nu(\phi^t_H)| - D(\nu) \leq d_{\mathrm{LH}}(L_s,L_t)\,,$$
  which is the left-hand side of the required inequality. The right-hand side follows immediately from the definition of the Hofer distance:
  $$d_{\mathrm{LH}}(L_s,L_t) \leq d_{\mathrm{H}}(\phi^s_H,\phi^t_H) = \|\phi^{s-t}_H\| \leq \int_0^1\big(\max (s-t)H - \min (s-t) H\big)\,dt = |s-t|\,,$$
  by our assumptions on $H$. The proof is complete.
\end{proof}

\subsection{Hamiltonian spectral invariants}

The rest of the paper is dedicated to proving Proposition \ref{prop:qm}. This requires Hamiltonian and Lagrangian spectral invariants, as well as a few preparatory results.

In this section $(M,\omega)$ is as in Theorem \ref{thm:main}. In order to construct Hofer-Lipschitz quasi-morphisms on $\Ham(M,\omega)$, we will use spectral invariants, following the basic idea of Entov--Polterovich \cite{Entov_Polterovich_Calabi_QM_QH}.

Let us fix a unital commutative ground ring $R$. Let $QH_*(M;R)$ be the quantum homology algebra of $M$ with coefficients in $R$. This has different flavors in different contexts, and for us
$$QH_*(M;R) = H_*(M;R)\otimes_R R[q,q^{-1}]\,,$$
where $q$ is a quantum variable, which has degree $|q| = -2N_M$, where $N_M$ is the minimal Chern number of $M$. The quantum product $*$ is the usual intersection product on singular homology, deformed by contributions from three-point Gromov--Witten invariants \cite{mcduff2012j}. It has degree $-\dim M$, and the fundamental class $[M]$ serves as the unit.

In the cases relevant to this paper, the quantum product is completely determined by the following relations, in addition to the usual intersection product \cite{mcduff2012j}, \cite{Entov_Polterovich_Calabi_QM_QH}:
\begin{itemize}
  \item If $M=\C P^n$, then
    $$[\C P^{n-1}]^{*(n+1)} = q[M]\,.$$
  \item If $M=S^2\times S^2 = \C P^1 \times \C P^1$, then
    $$QH_*(M;R) \cong QH_*(\C P^1;R)\otimes_{R[q,q^{-1}]} QH_*(\C P^1;R)$$
    as algebras. The only relevant relation for us here is given by
    $$\pt_M^{*2} = (\pt_{\C P^1}\otimes\pt_{\C P^1})^{*2} = (\pt_{\C P^1})^{*2}\otimes(\pt_{\C P^1})^{*2} = q[\C P^1]\otimes q[\C P^1] = q^2[M]\,.$$
    It follows that in case $2 \in R$ is invertible, the elements
    $$\frac{[M]\pm q^{-1}\pt}2$$
    are idempotent, that is they coincide with their quantum squares.
\end{itemize}

\begin{notation}
  We denote the above idempotents by
  $$e^R_\pm:=\frac{[M]\pm q^{-1}\pt}2 \in QH_4(S^2\times S^2;R)\,.$$
\end{notation}

Let us denote by $\wt\Ham(M,\omega)$ the universal cover of $\Ham(M,\omega)$, and let $\wt d_{\mathrm H}$ be the corresponding Hofer pseudometric:
$$\wt d_{\mathrm H}(\wt\phi\wt\psi)=\|\wt\phi^{-1}\wt\psi\|\,,\quad\text{where}\quad \|\wt\phi\|=\inf_{\wt\phi=\wt\phi_H}\int_0^1\Big(\max_MH_t - \min_MH_t\Big)\,dt\,,$$
with $\wt\phi_H \in \wt\Ham(M,\omega)$ being the class of the isotopy $(\phi_H^t)_{t\in[0,1]}$.

Recall that the Hamiltonian (that is, closed-string) spectral invariant is a function
$$c^R \fc QH_*(M;R) \times \wt\Ham(M,\omega) \to \R\cup\{-\infty\}\,,$$
satisfying, among others, the following properties:
\begin{itemize}
  \item \tb{(finiteness):} $c^R(A,\wt\phi) \in \R$ if $A \neq 0$;
  \item \tb{(triangle inequality):} $c^R(A*B,\wt\phi\wt\psi) \leq c^R(A,\wt\phi) + c^R(B,\wt\psi)$;
  \item \tb{(Hofer continuity):} $|c^R(\wt\phi)-c^R(\wt\psi)| \leq \wt d_{\mathrm H}(\wt\phi,\wt\psi)$.
\end{itemize}
\noindent This is by no means an exhaustive list, however it will suffice for our proof. Hamiltonian spectral invariants were constructed in the required generality in \cite{Oh_Spectral_Invts}.

\begin{notation}
  We denote $c^R_+:=c^R([M],\cdot)$.
\end{notation}

We have the following fundamental result regarding spectral invariants, proved in \cite{Entov_Polterovich_Calabi_QM_QH}:
\begin{prop}\label{prop:EP_sp_invts_Lipschitz_qms}
  \begin{enumerate}
    \item If $M = \C P^n$ and $\F$ is a field, then $c^\F_+$ is a quasi-morphism;
    \item If $M=S^2 \times S^2$ and $\F$ is a field with $\chr \F \neq 2$, then $c^\F(e^\F_\pm,\cdot)$ are quasi-morphisms. \qed
  \end{enumerate}
\end{prop}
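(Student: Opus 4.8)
The plan is to reconstruct the Entov--Polterovich argument, which for an idempotent $e\in QH_*(M;\F)$ reduces the quasi-morphism property of $c^\F(e,\cdot)$ to a single uniform estimate. Here $e=[M]$ when $M=\C P^n$ and $e=e^\F_\pm$ when $M=S^2\times S^2$; in the latter case the hypothesis $\chr\F\neq 2$ is exactly what makes the homogeneous elements $e^\F_\pm$ well defined and orthogonal, splitting $QH_*(M;\F)$ into two factors of which they are the units. First I would extract the formal consequences of the listed axioms. Since $e*e=e$, the triangle inequality gives the subadditivity
$$c^\F(e,\wt\phi\wt\psi)\le c^\F(e,\wt\phi)+c^\F(e,\wt\psi)\qquad\text{for all }\wt\phi,\wt\psi\in\wt\Ham(M,\omega)\,,$$
which is already one half of the defect inequality. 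Writing $\gamma(\wt\phi):=c^\F(e,\wt\phi)+c^\F(e,\wt\phi^{-1})$, replacing $\wt\phi$ by $(\wt\phi\wt\psi)\wt\psi^{-1}$ in the subadditivity shows that the defect $D(c^\F(e,\cdot))$ is bounded by $\sup_{\wt\phi}\gamma(\wt\phi)$. Thus everything comes down to proving that $\gamma$ is bounded.

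The second step is to rewrite $\gamma$ using Poincaré duality for Hamiltonian spectral invariants, a standard property beyond the three listed here, which states
$$c^\F(e,\wt\phi^{-1})=-\inf\{c^\F(B,\wt\phi)\,|\,\Pi(e,B)\neq 0\}\,,$$
with $\Pi$ the Poincaré pairing on $QH_*(M;\F)$. Because $e$ is homogeneous and each graded component of the relevant factor of quantum homology is one-dimensional over $\F$, the infimum is attained at the single Poincaré-dual class $e^\vee$ of $e$, yielding the sharp identity $c^\F(e,\wt\phi^{-1})=-c^\F(e^\vee,\wt\phi)$ and hence
$$\gamma(\wt\phi)=c^\F(e,\wt\phi)-c^\F(e^\vee,\wt\phi)\,.$$
The lower bound on $\gamma$ is then immediate from the triangle inequality applied to $e^\vee=e*e^\vee$: one gets $c^\F(e^\vee,\wt\phi)\le c^\F(e,\wt\phi)+c^\F(e^\vee,\id)$, so $\gamma(\wt\phi)\ge -c^\F(e^\vee,\id)$, a constant.

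The heart of the matter is the upper bound, and this is where the field structure enters. The point is that $e^\vee$ is invertible in $QH_*(M;\F)$: for $M=\C P^n$ one has $e^\vee=\pt$, with $\pt*(q^{-1}[\C P^{n-1}])=[M]$ by the relation $[\C P^{n-1}]^{*(n+1)}=q[M]$; for $M=S^2\times S^2$ the class $e^\vee$ lies in the factor $e^\F_\pm QH_*(M;\F)$, in which every nonzero homogeneous element is invertible. Writing $e=e^\vee*(e^\vee)^{-1}$ and applying the triangle inequality gives $c^\F(e,\wt\phi)\le c^\F(e^\vee,\wt\phi)+c^\F((e^\vee)^{-1},\id)$, whence
$$\gamma(\wt\phi)\le c^\F((e^\vee)^{-1},\id)\,,$$
a constant independent of $\wt\phi$. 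Combined with the lower bound and the subadditivity of the first step, this shows $c^\F(e,\cdot)$ is a quasi-morphism, proving both parts of the proposition.

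I expect the main obstacle to be precisely this last step --- not the formal juggling, but the two structural inputs it rests on: the Poincaré duality property together with the sharpness that identifies the infimum with the single dual class $e^\vee$, and the invertibility of $e^\vee$, i.e.\ the fact that the relevant factor of $QH_*(M;\F)$ is a field. Establishing the latter is what forces the hypotheses of the proposition --- a field $\F$ for $\C P^n$, and additionally $\chr\F\neq 2$ for $S^2\times S^2$, the characteristic assumption being needed to produce the splitting idempotents $e^\F_\pm$ in the first place.
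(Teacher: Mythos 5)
First, note that the paper itself gives no proof of this proposition: it is quoted from Entov--Polterovich with a \qed\ in the statement, so your attempt has to be measured against their argument. Your skeleton is the correct one --- subadditivity from $e*e=e$, plus a uniform upper bound on $\gamma(\wt\phi)=c^\F(e,\wt\phi)+c^\F(e,\wt\phi^{-1})$ extracted from Poincar\'e duality and invertibility --- but the step where you assert the \emph{sharp identity} $c^\F(e,\wt\phi^{-1})=-c^\F(e^\vee,\wt\phi)$ is a genuine gap, and it is exactly where the whole content of the theorem sits. The duality infimum runs over \emph{all} classes $B$ with $\Pi(e,B)\neq0$: these include inhomogeneous classes (so you already need the additional property that the spectral invariant of a sum of homogeneous classes of distinct degrees is the maximum of the individual invariants, which you never invoke), and, crucially for $S^2\times S^2$, classes with components in the complementary factor $e^\F_\mp*QH_*(M;\F)$; one-dimensionality of the graded pieces of the factor containing $e$ says nothing about such $B$. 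That this is not pedantry is shown by taking $e=[M]$ on $S^2\times S^2$ over $\C$: the unit is idempotent and its dual class $\pt$ is invertible ($\pt*(q^{-2}\pt)=[M]$), so every ingredient you list is available, yet $c^\C([M],\cdot)$ is \emph{not} a quasi-morphism --- one has $c([M],\cdot)=\max\big(c(e^\C_+,\cdot),c(e^\C_-,\cdot)\big)+O(1)$, and the maximum of the two distinct homogeneous quasi-morphisms $\mu^\C_\pm$ (they are distinct: the present paper's whole strategy rests on $\mu_+-\mu_-\neq0$) has unbounded defect. Concretely, sharpness fails there because classes such as $\pt-q[M]=-2qe^\C_-$ pair nontrivially with $[M]$ but can have spectral invariant far below $c(\pt,\cdot)$. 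So your ``sharpness'' is not a formal consequence of one-dimensional graded pieces; it fails whenever the degree-zero part of the \emph{full} quantum homology is larger than one line.

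The argument can be repaired, and differently in the two cases, which also corrects your diagnosis of where the hypotheses enter. For $\C P^n$ the degree-zero part of the whole algebra $QH_*(\C P^n;\F)$ is the single line $\F\cdot\pt$ (since $|q|=-2(n+1)$), so after reducing to homogeneous $B$ the infimum is over $\F^\times\cdot\pt$ and your identity does hold; here the field hypothesis is used through the duality formula and through the harmlessness of scalars, \emph{not} through invertibility of $\pt$, which holds over any ring. For $S^2\times S^2$ the degree-zero part is two-dimensional, spanned by $q[M]$ and $\pt$, and one must first project into the factor: by the Frobenius property $\Pi(e^\F_\pm,B)=\Pi(e^\F_\pm,e^\F_\pm*B)$ and the triangle inequality $c(e^\F_\pm*B,\wt\phi)\leq c(e^\F_\pm,\id)+c(B,\wt\phi)$, the infimum over admissible $B$ agrees, up to the constant $c(e^\F_\pm,\id)$, with the infimum over admissible classes inside the factor, where one-dimensionality finally applies. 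This projection step is what genuinely uses the idempotent splitting (hence $\chr\F\neq2$), and it is precisely the step missing from your write-up. Entov--Polterovich avoid locating the infimum altogether: they prove $c(e,\wt\phi)\leq c(B,\wt\phi)+\const$ for \emph{every} admissible $B$ at once, using that $e*B$ is a nonzero, hence invertible, element of the field factor together with valuation estimates (and note that over the Laurent polynomials $\F[q,q^{-1}]$ used in this paper the factor is only a graded field; the literal field structure requires the completed Novikov coefficients), rather than the invertibility of the single class $e^\vee$.
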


\begin{notation}
  We let
  $$\mu^\F = \big(c^\F_+\big)_\infty \fc \wt\Ham(\C P^n) \to \R \quad\text{and} \quad \mu_\pm^\F = \big(c^\F(e^\F_\pm;\cdot)\big)_\infty \fc \wt\Ham(S^2\times S^2)\to \R$$
  denote the corresponding homogenized quasi-morphisms.
\end{notation}
\begin{rem}\label{rem:homog_Lipschitz_qm_again_Lipschitz}
  Note that if $G$ is a group, $d$ is a biinvariant pseudometric on $G$, and $\nu \fc G \to \R$ is a quasi-morphism which is $\lambda$-Lipschitz with respect to $d$, then so is its homogenization $\nu_\infty$. Indeed, thanks to the biinvariance of $d$, we have for $a,b,g,h \in G$:
  $$d(ab,gh) \leq d(ab,ah) + d(ah,gh) = d(b,h) + d(a,g)\,,$$
  and consequently by induction
  $$d(a^n,b^n)\leq nd(a,b)\,.$$
  It follows that
  $$\frac 1\lambda|\nu_\infty(a) - \nu_\infty(b)| = \lim_{n\to \infty} \frac 1 {\lambda n}|\nu(a^n) - \nu(b^n)| \leq \lim_{n\to \infty} \frac {d(a^n,b^n)}n \leq \lim_{n\to \infty} d(a,b) = d(a,b)$$
\end{rem}
\noindent In \cite{Entov_Polterovich_Calabi_QM_QH} Entov--Polterovich show that the homogenized quasi-morphisms $\mu^\F,\mu^\F_\pm$ descend from $\wt\Ham(M,\omega)$ to $\Ham(M,\omega)$. Combining this with Remark \ref{rem:homog_Lipschitz_qm_again_Lipschitz}, we obtain:
\begin{prop}\label{prop:EP_mu_Lipschitz_qms_descend_Ham}
  The quasi-morphisms $\mu^\F,\mu^\F_\pm$ descend to $\Ham(M,\omega)$, where they are $1$-Lip\-schitz with respect to the Hofer distance. \qed
\end{prop}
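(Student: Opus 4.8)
The plan is to assemble the statement from three ingredients already in place above: the Hofer-continuity of the Hamiltonian spectral invariants, the homogenization estimate of Remark \ref{rem:homog_Lipschitz_qm_again_Lipschitz}, and the descent result of Entov--Polterovich. First I would note that the Hofer continuity property listed above says precisely that, for each fixed homology class, the spectral invariant is $1$-Lipschitz on $\wt\Ham(M,\omega)$ with respect to $\wt d_{\mathrm H}$. Applied to the classes appearing in Proposition \ref{prop:EP_sp_invts_Lipschitz_qms}, this shows that $c^\F_+$ (for $M=\C P^n$) and $c^\F(e^\F_\pm,\cdot)$ (for $M=S^2\times S^2$) are $1$-Lipschitz quasi-morphisms on the universal cover.

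Since the Hofer pseudometric $\wt d_{\mathrm H}$ on $\wt\Ham(M,\omega)$ is biinvariant---by the same elementary argument that establishes biinvariance downstairs---Remark \ref{rem:homog_Lipschitz_qm_again_Lipschitz} applies verbatim with $\lambda=1$, and shows that the homogenizations $\mu^\F$ and $\mu^\F_\pm$ are again $1$-Lipschitz on $\wt\Ham(M,\omega)$ with respect to $\wt d_{\mathrm H}$.

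It remains to pass to $\Ham(M,\omega)$. That $\mu^\F,\mu^\F_\pm$ are invariant under the central deck subgroup $\pi_1(\Ham(M,\omega)) \subset \wt\Ham(M,\omega)$, and hence descend to functions $\ol\mu^\F,\ol\mu^\F_\pm$ on $\Ham(M,\omega)$, is exactly the Entov--Polterovich result cited above, which I would invoke as a black box. To transfer the Lipschitz constant I would use the standard relation between the two Hofer pseudometrics: by the definition of the Hofer norm on $\Ham(M,\omega)$ as an infimum over generating Hamiltonians, together with biinvariance, one has
$$d_{\mathrm H}(\phi,\psi) = \inf\big\{\wt d_{\mathrm H}(\wt\phi,\wt\psi)\,\big|\,\wt\phi,\wt\psi\text{ lifts of }\phi,\psi\big\}\,.$$
Using that the descent satisfies $\ol\mu^\F(\phi)=\mu^\F(\wt\phi)$ for every lift $\wt\phi$, the $1$-Lipschitz bound on the cover gives
$$|\ol\mu^\F(\phi)-\ol\mu^\F(\psi)| = |\mu^\F(\wt\phi)-\mu^\F(\wt\psi)| \leq \wt d_{\mathrm H}(\wt\phi,\wt\psi)$$
for every pair of lifts; taking the infimum over lifts yields $|\ol\mu^\F(\phi)-\ol\mu^\F(\psi)|\leq d_{\mathrm H}(\phi,\psi)$, as required, and identically for $\mu^\F_\pm$.

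The main obstacle, such as it is, lies entirely in the cited descent statement, whose proof rests on the Calabi quasi-morphism machinery of Entov--Polterovich; the remaining steps are formal. The only point requiring mild care is the identification of $d_{\mathrm H}$ with the infimum of $\wt d_{\mathrm H}$ over lifts, which hinges on the fact that distinct lifts of a Hamiltonian diffeomorphism correspond precisely to distinct homotopy classes of generating isotopies, so that the infimum defining the Hofer norm on $\Ham(M,\omega)$ coincides with the infimum of $\wt d_{\mathrm H}$-distances over all choices of lift.
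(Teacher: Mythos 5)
Your proof is correct and follows essentially the same route as the paper: Hofer continuity of the spectral invariants gives the $1$-Lipschitz property on $\wt\Ham(M,\omega)$, Remark \ref{rem:homog_Lipschitz_qm_again_Lipschitz} (whose hypothesis of biinvariance holds for $\wt d_{\mathrm H}$) transfers it to the homogenizations $\mu^\F,\mu^\F_\pm$, and the Entov--Polterovich descent result is invoked as a black box, exactly as the paper does. The only difference is that you spell out the passage of the Lipschitz bound from $\wt d_{\mathrm H}$ on the cover to $d_{\mathrm H}$ on $\Ham(M,\omega)$ via the identification of $d_{\mathrm H}$ with the infimum of $\wt d_{\mathrm H}$ over lifts---a step the paper leaves implicit, and which you justify correctly.
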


\subsection{Lagrangian spectral invariants}

The last piece of the puzzle is Lagrangian spectral invariants as defined in \cite{leclercq2018spectral}. These play a central role in the proof of the main theorem, where the idea is that the quasi-morphisms $\mu^\F,\mu^\F_\pm$ from the previous section can be expressed in terms of Lagrangian spectral invariants for suitable Lagrangians.

Let us describe the relevant setting and summarize the necessary results. Let $L\subset M$ be a closed connected Lagrangian, which is assumed to be positively monotone, meaning there is $\tau > 0$ such that the symplectic area $\omega \fc \pi_2(M,L)\to \R$ and the Maslov index $\mathsf m \fc \pi_2(M,L) \to \Z$ satisfy
$$\omega = \tau\cdot \mathsf m\,.$$
We furthermore assume that the minimal Maslov number of $L$, that is
$$N_L:=\inf \{\mathsf m(A) \,|\, A\in \pi_2(M,L)\,,\mathsf m(A) > 0\}\,,$$
is at least $2$. The last assumption is that $L$ is either relatively $\Pin^+$ or relatively $\Pin^-$, see \cite{Solomon_Intersection_Thry_Moduli_Space_Holo_Curves_Lag_Bd_Cond}, \cite{Wehrheim_Woodward_Orientations_Quilts}; if either one of these cases holds, we say that $L$ is relatively $\Pin$.

\begin{center}
  \emph{In what follows, all Lagrangians are assumed closed, connected, monotone, and relatively $\Pin$.}
\end{center}

Fix a ground ring $R$. Under our assumptions, one can define the Lagrangian quantum homology $QH_*(L;R)$ with coefficients in $R$ \cite{biran2007quantum}, \cite{Biran_Cornea_Lag_Top_Enumerative_Geom}, \cite{Zapolsky_Orientations}. It is an associative unital $R$-algebra, with the canonically defined fundamental class $[L] \in QH_*(L;R)$ serving as the unit.

We will also need to use Lagrangian quantum homology with twisted coefficients, see, for instance, \cite{Biran_Cornea_Lag_Top_Enumerative_Geom} and references therein, as well as \cite{Zapolsky_Orientations}. Namely, if $\cQ$ is a local system on $L$ with values in $R$-modules, see \cite{Zapolsky_Orientations}, for example, one can define the twisted homology $QH_*(L;\cQ)$ (\emph{ibid.}). It carries an associative product. It is also unital, provided the $R$-modules comprising $\cQ$ are free and of finite rank, in which case the unit is again given by the well-defined fundamental class $[L]$. Henceforth we assume all local systems to be free and of finite rank.

\begin{rem}
  It will usually be clear from the context over which ring a given local system is defined. When we wish to specify the ring $R$, we will say that we have an $R$-local system.
\end{rem}

The last algebraic structure we need is the so-called quantum action \cite{biran2007quantum}, \cite{Biran_Cornea_Lag_Top_Enumerative_Geom}, \cite{Zapolsky_Orientations}:
$$\bullet\fc QH_*(M;R)\otimes QH_*(L;\cQ) \to QH_*(L;\cQ)\,,$$
which makes $QH_*(L;\cQ)$ into a superalgebra over $QH_*(M;R)$, where $[M]$ acts as the identity.
\begin{rem}
  The quantum action can be shown to equal the composition of the closed-open map (also known as the Albers map) on the first factor, followed by the Lagrangian quantum product.
\end{rem}

\begin{rem}
  A local system on a path-connected space $X$ is completely determined, up to isomorphism, by its fiber $F$ at a chosen point $x_0 \in X$ and the associated monodromy representation $\pi_1(X,x_0) \to\Aut(F)$. This simple fact will be used in the following example.
\end{rem}

\begin{exam}\label{ex:loc_sys_quantum_action}
  These will be important for the proof of Proposition \ref{prop:qm}.
  \begin{enumerate}
    \item A local system $\cQ$ on $L$ is called trivial if the module at each point is $R$, while the parallel transport maps are all identity; in this case we have $QH_*(L;\cQ)\equiv QH_*(L;R)$. We will also refer to this as the untwisted case.
    \item A pivotal role in Lagrangian Floer and quantum homology is played by the so-called superpotential, see \cite{Biran_Cornea_Lag_Top_Enumerative_Geom} and the references therein. We will only need it for Lagrangian tori in $\C P^n$. If $K$ is such a torus, its superpotential is a certain meromorphic function $W$ whose critical points are in a natural bijection with those local systems $\cQ$ on $K$ for which $QH_*(K;\cQ)\neq 0$.
    \item This is an example of the general situation in the previous item. There is a certain torus $T_{\mathrm{Ch}}\subset \C P^n$, which we will refer to as the \emph{Chekanov lift}, which in case $n=2$ is the Chekanov torus described, for instance, in \cite{Chekanov_Schlenk_Notes_Monotone_Twist_Tori}, \cite{Auroux_Mirror_Symm_T_Duality_Compl_Antican_Divisor}, while for $n\geq 3$ it is given by a construction from \cite{Chanda_Hirschi_Wang_Infinitely_Exotic_Lag_Tori_Higher_Projective_Spaces}, also described in detail in \cite{Kawamoto_Shelukhin_Spectral_Invts_Over_Integers}. Its superpotential is explicitly computed in \cite[Example 4.2]{Chanda_Hirschi_Wang_Infinitely_Exotic_Lag_Tori_Higher_Projective_Spaces}. It clearly has critical points, and thus there are local systems $\cQ$ over $\C$ for which $QH_*(T_{\mathrm{Ch}};\cQ)\neq 0$. When $n=2$, one also has $QH_*(T_{\mathrm{Ch}};\F_7)\neq 0$, and moreover for any field $\F$ with $\chr \F\neq 2$ there exist local systems $\cQ$ over $\F$ with $QH_*(T_{\mathrm{Ch}};\cQ)\neq 0$, see \cite[Section 2.6]{leclercq2018spectral}
    \item Let $M=S^2$ and $L\subset M$ be an equator. Let $\cQ_\pm$ be a local system on $L$ with fiber $R$ at a basepoint and monodromy around $L$ being $\pm1\in R^\times=\Aut(R)$. The quantum homology $QH_*(L;\cQ_\pm)$ does not vanish, the basic reason being that $\pm1$ are exactly the critical points of the superpotential associated to $L$. See \cite{Biran_Cornea_Lag_Top_Enumerative_Geom} and references therein. The homology $QH_*(L;\cQ_\pm)$ is a graded module over the ring $R[t,t^{-1}]$, where $|t|=-N_L=-2$, and the quantum action by the point class $\pt \in QH_0(S^2;R)$ on $[L] \in QH_*(L;\cQ_\pm)$ is given by
        $$\pt\bullet[L]=\pm t[L]\,.$$
        This can be extracted from \cite{Biran_Cornea_Lag_Top_Enumerative_Geom}, but the intuition is very simple: there is only one holomorphic disk in $S^2$ with boundary on $L$ of Maslov index $2$ which also passes through a prescribed point in $M\setminus L$, and the orientations work out so that $\pt\bullet[L]=[L]$ in the untwisted case of $\cQ_+$. For $\cQ_-$ the coefficient is multiplied by the monodromy around $L$, which is $-1$.
    \item We can now take the product of two copies of the previous example to obtain the Clifford torus $T=L\times L\subset S^2\times S^2$. The corresponding quantum action is given by the tensor product of the actions in the factors. More specifically, for $\epsilon,\eta\in\{\pm\}$ we can consider the local system
        $$\cQ_{\epsilon\eta}:=\cQ_\epsilon\boxtimes\cQ_\eta\,,$$
        on $T$, where $\boxtimes$ stands for the exterior tensor product.\footnote{The exterior product $\cQ\boxtimes\cR$ of local systems $\cQ,\cR$ on $X,Y$ is a local system on $X\times Y$ which has fibers $(\cQ\boxtimes\cR)_{(x,y)} = \cQ_x\otimes \cR_y$ and obvious parallel transport maps.} Explicitly, $\cQ_{\epsilon\eta}$ is a local system with fiber $R\otimes_RR\equiv R$ at a basepoint, having monodromy $\epsilon\cdot 1$ around loops of the form $L\times \pt$, and monodromy $\eta\cdot 1$ around loops of the form $\pt\times L$. Then
        $$QH_*(T;\cQ_{\epsilon\eta})\cong QH_*(L;\cQ_\epsilon)\otimes_{R[t,t^{-1}]} QH_*(L;\cQ_\eta)$$
        as algebras. Moreover, for the quantum action of $\pt \in QH_0(S^2\times S^2;R)$ on $[T] = [L]\otimes[L] \in QH_*(T;\cQ_{\epsilon\eta})$ we have
        \begin{multline*}
          \pt\bullet[T]=(\pt_{S^2}\otimes\pt_{S^2})\bullet([L]\otimes[L]) = (\pt_{S^2}\bullet[L])\otimes(\pt_{S^2}\bullet[L])\\
          =(\epsilon t[L])\otimes(\eta t[L]) = \epsilon\eta t^2[L]\otimes[L]=\epsilon\eta t^2[T]=\epsilon\eta q[T]\,.
        \end{multline*}
        The last equality is thanks to the fact that one can view $R[q,q^{-1}]$ as a subring of $R[t,t^{-1}]$ by means of the identity $t^2=q$, see \cite{biran2007quantum}. In particular, we have the following consequence in case $2\in R$ is invertible:
        \begin{multline*}
          e^R_\pm\bullet [T]=\frac{[M]\pm q^{-1}\pt}{2}\bullet [T]=\frac 12([M]\bullet[T] \pm  q^{-1}\pt\bullet [T])\\
           = \frac 12(1\pm \epsilon\eta q^{-1}q)[T] = \frac 12(1\pm \epsilon\eta)[T]\,.
        \end{multline*}
        This equals $[T]$ if $\pm\epsilon\eta=1$ and zero if $\pm\epsilon\eta=-1$.
  \end{enumerate}

\end{exam}

In \cite{leclercq2018spectral}, the Lagrangian spectral invariant corresponding to $L$, $R$, and $\cQ$ is constructed as a function
$$QH_*(L;\cQ)\times C^\infty(M\times[0,1])\to\R\cup\{-\infty\}\,.$$
In this paper we will only use the invariant corresponding to the fundamental class $[L] \in QH_*(L;\cQ)$. Since $[L]$ is the unit element, we have $[L]\neq 0$ if and only if $QH_*(L;\cQ)\neq 0$. In this case the corresponding spectral invariant is always finite, and we will denote the induced function by
$$\ell^\cQ_L \fc C^\infty(M\times[0,1])\to \R\,.$$
In case $\cQ$ is the trivial local system, we will use the notation $\ell^R_L$.

Here we need certain properties satisfied by these functions, which we will now formulate. For this we need some preliminary definitions. Consider the path space
$$\cP_L = \{\gamma \in C^\infty([0,1],M)\,|\,\gamma(0),\gamma(1) \in L\,,[\gamma]=*\in\pi_1(M,L)\}\,,$$
where $* \in \pi_1(M,L)$ denotes the class of the constant path. Recall that a capping of $\gamma \in \cP_L$ is a smooth map $u \fc D^2_+ \to M$, where $D^2_+ = D^2\cap\{\IM z \geq 0\}$, with $u([-1,1])\subset L$ and $u(e^{\pi i t}) = \gamma(t)$ for all $t \in [0,1]$. For a Hamiltonian $H$ on $M$ we have the corresponding action functional
$$\cA_{H:L}(\gamma,u) = \int_0^1H_t(\gamma(t))\,dt - \int_{D^2_+}u^*\omega\,.$$
Its critical points are pairs $(\gamma,u)$ where $\gamma$ is an orbit of $H$, that is $\dot\gamma(t) = X^t_H(\gamma(t))$. We let $\cP_L(H) \subset \cP_L$ be the collection of such orbits. Then the action spectrum of $H$ relative to $L$ is defined by
$$\Spec(H:L) = \{\cA_{H:L}(\gamma,u)\,|\,\gamma \in \cP_L(H)\,,u\text{ a capping of }\gamma\}\,.$$
If $\gamma$ is a nondegenerate orbit, meaning $d_{\gamma(0)}\phi_H(T_{\gamma(0)})L$ intersects $T_{\gamma(1)}L$ transversely, then for any capping $u$ of $\gamma$ we have the corresponding Viterbo--Malsov index \cite{Viterbo_Intersection_Sous_Varietes_Lagrangiennes}
$$\mathsf m_{H:L}(\gamma,u) \in \Z\,.$$
We use the convention where for $H$ which in a Weinstein neighborhood $U$ of $L$ equals $\pi^*f$, where $\pi \fc U \to L$ is the cotangent bundle projection, and where $f \in C^\infty(L)$ is a $C^2$-small Morse function, the index $\mathsf m_{H:L}(\gamma,u)$ equals the Morse index of $f$ at $\gamma$, considered as a critical point of $f$, in case $u$ is the constant capping. When recapping by a disk $w$, we have the relation
$$\mathsf m_{H:L}(\gamma,u\sharp w) = \mathsf m_{H:L}(\gamma,u) - \mathsf m([w])\,.$$

We can now list the relevant properties of the Lagrangian spectral invariants, see \cite{leclercq2018spectral}.
\begin{itemize}
  \item \tb{(invariance):} $\ell^\cQ_L(H)$ only depends on $\wt\phi_H \in \wt\Ham(M,\omega)$, provided $H$ is normalized, that is $\int_MH_t\omega^n=0$ for all $t$; by abuse of notation, we let $\ell^\cQ_L$ also stand for the induced function
      $$\ell^\cQ_L\fc\wt\Ham(M,\omega) \to \R\,.$$
      It will be clear from the context which one of the two functions is meant in each instance.
  \item \tb{(spectrality):} $\ell^\cQ_L(H) \in \Spec(H:L)$; moreover if $\phi_H(L)$ intersects $L$ transversely, then there are $\gamma \in \cP_L(H)$ and a capping $u$ of $\gamma$ such that $\mathsf m_{H:L}(\gamma,u)= \dim L$ and such that $\ell^\cQ_L(H) = \cA_{H:L}(\gamma,u)$.
  \item \tb{(quantum action):} if $A \in QH_*(M;R)$ is such that $A\bullet [L] = [L]$, then $\ell^\cQ_L \leq c^R(A;\cdot)$ as functions on $\wt\Ham(M,\omega)$.
  \item \tb{(Lagrangian control):} if $H|_L\equiv c \in \R$, then $\ell^\cQ_L(H) = c$.
  \item \tb{(normalization):} if $c$ is a function of time, then $\ell^\cQ_L(H+c) = \ell^\cQ_L(H) + \int_0^1c(t)\,dt$.
  \item \tb{(positivity):} $\ell^\cQ_L(H) + \ell^\cQ_L(\ol H) \geq 0$, where $\ol H_t=-H_{1-t}$.
  \item \tb{(continuity):} $\int_0^1 \min (H_t-H'_t)\,dt \leq \ell^\cQ_L(H) - \ell^\cQ_L(H') \leq \int_0^1 \max(H_t-H'_t)\,dt$.
\end{itemize}

\subsection{Proof of Proposition \ref{prop:qm}}

Recall what the proposition says: \emph{If $(M,\omega)$ is as in Theorem \ref{thm:main} and $T\subset M$ is the Clifford torus, then there are a Hamiltonian $H\in C^\infty(M,[0,1])$, and a quasi-morphism $\nu \fc \Ham(M,\omega) \to \R$, which is $2$-Lipschitz with respect to the Hofer distance, such that $\nu(\phi_H^t)=t$ for all $t\in \R$, and $\nu(\psi)=0$ whenever $\psi(T)=T$.}

The fundamental idea in the construction of such a quasi-morphism is contained in \cite{Khanevsky_Hofers_Metric_Space_Diameters}, and goes back at least to
\cite{Entov_Polterovich_Py_Continuity_QMs_Sympl_Maps}, and it is \emph{to take the difference of two suitable quasi-morphisms}. More specifically, we will see that there are Lagrangians $K,L\subset M$ such that $K\cap L = \varnothing$, and quasi-morphisms $\mu_K,\mu_L \fc \Ham(M,\omega)\to\R$ coming from Hamiltonian spectral invariants---these are $\mu^\F,\mu^\F_\pm$ for suitable coefficient fields $\F$---such that $\mu_K$ ``detects'' $K$ in the sense that $\mu_K(\phi_G^t)=t$ for any $G\in C^\infty(M)$ with $G|_K\equiv 1$, and similarly for $\mu_L$, which ``detects'' $L$, and such that $\mu_K$, $\mu_L$ coincide on the subgroup $\{\psi\in\Ham(M,\omega)\,|\,\psi(T)=T\}$. We then put $\nu=\mu_K-\mu_L$, pick any $H\in C^\infty(M,[0,1])$ with $H|_K\equiv 1$, $H|_L\equiv 0$, and the required properties easily follow.

Let us now pass to the details. To flesh out the above idea, we need a few preparatory results. The first one of these relates Hamiltonian and Lagrangian spectral invariants in the context of quasi-morphisms.
\begin{prop}\label{prop:Ham_sp_invt_QMs_Lag_sp_invt}
  Let $(M,\omega)$ be a closed connected symplectic manifold and $L\subset M$ be Lagrangian. Assume $\F$ is a field and let $\cQ$ be a local system on $L$ such that $QH_*(L;\cQ)\neq 0$. If $A \in QH_*(M;\F)$ is a class satisfying $A\bullet [L] = [L]$, such that the corresponding spectral invariant $c^\F(A,\cdot) \fc \wt\Ham(M,\omega)\to\R$ is a quasi-morphism, then
  $$c^\F(A,\cdot) - \ell^\cQ_L$$
  is a bounded function on $\wt\Ham(M,\omega)$. In particular $\ell^\cQ_L$ is likewise a quasi-morphism, and moreover the homogenizations of $c^\F(A,\cdot)$, $\ell^\cQ_L$ coincide.
\end{prop}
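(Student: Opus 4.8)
The plan is to prove the two-sided estimate $0 \le c^\F(A,\wt\phi) - \ell^\cQ_L(\wt\phi) \le C$ for all $\wt\phi \in \wt\Ham(M,\omega)$, with $C$ an explicit constant, and then deduce both consequences formally. First note that since $A\bullet[L]=[L]\neq 0$ we have $A\neq 0$, so by finiteness $c^\F(A,\cdot)$ is real-valued; and since $QH_*(L;\cQ)\neq 0$ the class $[L]$ is nonzero, so $\ell^\cQ_L$ is finite as well. Thus the difference is a genuine real-valued function. For brevity write $c:=c^\F(A,\cdot)$ and $\ell:=\ell^\cQ_L$, and let $D:=D(c)$ be the defect of $c$, which is finite by hypothesis.

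The lower bound is immediate: the \textbf{(quantum action)} property, applicable precisely because $A\bullet[L]=[L]$, gives $\ell\le c$ pointwise, hence $c-\ell\ge 0$.

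For the upper bound I would exploit the behavior under inversion. The quasi-morphism inequality for $c$, applied to the pair $(\wt\phi,\wt\phi^{-1})$, yields
$$c(\wt\phi) + c(\wt\phi^{-1}) \le c(\id) + D\,.$$
On the other hand, the \textbf{(positivity)} property of the Lagrangian spectral invariant reads $\ell(\wt\phi) + \ell(\wt\phi^{-1}) \ge 0$, once we observe that $\ol H$ is normalized and generates $\wt\phi^{-1}$ whenever a normalized $H$ generates $\wt\phi$. Combining this with the quantum action inequality $\ell(\wt\phi^{-1}) \le c(\wt\phi^{-1})$ produces the chain
$$\ell(\wt\phi) \ge -\ell(\wt\phi^{-1}) \ge -c(\wt\phi^{-1}) \ge c(\wt\phi) - c(\id) - D\,,$$
so that $c(\wt\phi) - \ell(\wt\phi) \le c(\id) + D =: C$. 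Together with the lower bound this shows $c-\ell$ is bounded.

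The two stated consequences then follow with no further geometric input. Since $c$ is a quasi-morphism and $c-\ell$ is bounded by $C$ in absolute value, $\ell$ is a quasi-morphism as well, its defect exceeding that of $c$ by at most $3C$. And because $|c(\wt\phi^n) - \ell(\wt\phi^n)| \le C$ for every $n$, dividing by $n$ and letting $n\to\infty$ in the defining limit shows $\ell_\infty = c_\infty$, i.e. the homogenizations coincide. The only substantive step is the upper bound, and its crux is recognizing that the one-sided quantum action inequality upgrades to two-sided control once it is paired with positivity and evaluated on the inverse $\wt\phi^{-1}$; the rest is bookkeeping with the defect of $c$.
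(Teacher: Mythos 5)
Your proof is correct and follows essentially the same route as the paper: the lower bound from the quantum action property, and the upper bound from the chain $\ell(\wt\phi)\geq-\ell(\wt\phi^{-1})\geq-c(\wt\phi^{-1})\geq c(\wt\phi)-c(\id)-D(c)$ combining positivity, quantum action on the inverse, and the quasi-morphism defect. Your added details (finiteness, the normalization of $\ol H$, the explicit defect bound, and the limit argument for the homogenizations) are all correct elaborations of steps the paper leaves implicit.
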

\begin{proof}
  Let us abbreviate $c:=c^\F(A;\cdot)$. Fix $\wt\phi \in \wt\Ham(M,\omega)$. Note that by the quasi-morphism property of $c$ we have:
  $$c(\id) - c(\wt\phi) - c(\wt\phi^{-1}) \geq -D(c)\,,\quad\text{which implies}\quad -c(\wt\phi^{-1})\geq c(\wt\phi) - c(\id) - D(c)\,.$$
  Now we have the following string of inequalities:
  \begin{align*}
    c(\wt\phi) &\geq \ell^\cQ_L(\wt\phi) && \text{by quantum action}\\
               &\geq -\ell^\cQ_L(\wt\phi^{-1}) && \text{by positivity}\\
               &\geq -c(\wt\phi^{-1}) && \text{by quantum action}\\
               &\geq c(\wt\phi) - c(\id) - D(c) && \text{by the above discussion}\,.
  \end{align*}
  In total we deduce:
  $$c(\wt\phi) - c(\id) - D(c) \leq \ell^\cQ_L(\wt\phi) \leq c(\wt\phi)\,,$$
  as claimed. The last assertions then obviously follow.
\end{proof}

In the next result we will apply this general statement to the specific symplectic manifolds appearing in Theorem \ref{thm:main}, and certain Lagrangians, which we will now list.
\begin{itemize}
  \item In $M=\C P^n$, we will need $\R P^n$, the Clifford torus $T$, as well as the Chekanov lift $T_{\mathrm{Ch}}$, see Example \ref{ex:loc_sys_quantum_action}, item (iii).
  \item In $M=S^2\times S^2$ the required Lagrangians are:
        $$\text{the antidiagonal}\quad\ol \Delta = \{(z,-z)\,|\,z\in S^2\}\,,$$
        $$\text{the exotic torus}\quad T_{\mathrm{ex}} = \{(z,w) \in M\,|\,\langle z,w\rangle = -\tfrac 12\,,z_3+w_3=0\}\,,$$
      where $z_3,w_3$ denote the third coordinates of $z,w\in S^2\subset \R^3$, and the Clifford torus $T$.
\end{itemize}

\begin{coroll}\label{cor:Ham_qm_coincides_homog_Lag_sp_invt}
  \begin{enumerate}
    \item If $M=\C P^n$, $L\subset M$ is a Lagrangian, $\F$ is a field and $\cQ$ is an $\F$-local system on $L$ with $QH_*(L;\cQ) \neq 0$, then $c^\F_+ - \ell^\cQ_L$ is a bounded function on  $\wt\Ham(\C P^n)$; in particular $\ell^\cQ_L$ is a quasi-morphism whose homogenization $(\ell^\cQ_L)_\infty$ coincides with $\mu^\F$, and thus descends to $\Ham(\C P^n)$. This applies to the following cases:
        \begin{enumerate}
          \item $L=\R P^n$, $\F = \F_2$, $\cQ=$ the trivial local system;
          \item $L= T_{\mathrm{Ch}}$ local systems described in Example \ref{ex:loc_sys_quantum_action}, item (iii).
          \item $L=T$, the Clifford torus, $\F$ is any field and $\cQ$ is the trivial local system.
        \end{enumerate}
    \item Let $M=S^2 \times S^2$, $\F$ be a field with $\chr \F \neq 2$. Assume $L$ is a Lagrangian and $\cQ$ is a local system with $QH_*(L;\cQ)\neq 0$ such that $e^\F_+\bullet [L] = [L]$. Then $c^\F(e^\F_+,\cdot) - \ell^\cQ_L$ is bounded. In particular $\ell^\cQ_L$ is a quasi-morphism satisfying $(\ell^\cQ_L)_\infty = \mu^\F_+$, and thus $(\ell^\cQ_L)_\infty$ descends to $\Ham(S^2\times S^2)$. The same result holds if we replace $e^\F_+$, $c^\F(e^\F_+,\cdot)$, $\mu^\F_+$ by $e^\F_-$, $c^\F(e^\F_-,\cdot)$, $\mu^\F_-$, respectively. This applies to the following cases for $\F=\F_3$:
        \begin{enumerate}
          \item $L = \ol\Delta$, $\cQ=$ the trivial local system, and $e^\F_-$;
          \item $L = T_{\mathrm{ex}}$, $\cQ=$ the trivial local system, and $e^\F_+$;
          \item $L = T$, the Clifford torus, the local systems are $\cQ_{++}$ and $\cQ_{+-}$ from Example \ref{ex:loc_sys_quantum_action}, for which it holds that $e^{\F_3}_\pm\bullet [T]=[T]$ for $[T]\in QH_*(T;\cQ_{+\pm})$.
        \end{enumerate}
  \end{enumerate}
\end{coroll}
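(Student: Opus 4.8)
The plan is to derive the corollary as a direct specialization of Proposition \ref{prop:Ham_sp_invt_QMs_Lag_sp_invt}, after checking its two standing hypotheses in each of the enumerated cases. The generic part of the argument is purely formal. For Part (1) I would take $A=[M]$, so that $c^\F(A,\cdot)=c^\F_+$, which is a quasi-morphism by Proposition \ref{prop:EP_sp_invts_Lipschitz_qms}(1), and the relation $[M]\bullet[L]=[L]$ holds automatically since $[M]$ acts as the unit of the quantum action. For Part (2) I would take $A=e^\F_\pm$; these are quasi-morphisms by Proposition \ref{prop:EP_sp_invts_Lipschitz_qms}(2), and $e^\F_\pm\bullet[L]=[L]$ is exactly the hypothesis imposed. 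In either situation Proposition \ref{prop:Ham_sp_invt_QMs_Lag_sp_invt} immediately gives that $c^\F(A,\cdot)-\ell^\cQ_L$ is bounded, that $\ell^\cQ_L$ is a quasi-morphism, and that its homogenization coincides with that of $c^\F(A,\cdot)$, which is by definition $\mu^\F$ (resp.\ $\mu^\F_\pm$); descent to $\Ham(M,\omega)$ is then inherited through Proposition \ref{prop:EP_mu_Lipschitz_qms_descend_Ham}. Hence all the assertions of the corollary beyond this step come down to verifying, in each sub-case, that $QH_*(L;\cQ)\neq 0$ and, in Part (2), that $e^\F_\pm\bullet[L]=[L]$ for the indicated sign.

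For the nonvanishing statements I would argue case by case. In (1a), the isomorphism $QH_*(\R P^n;\F_2)\cong H_*(\R P^n;\F_2)$ with the trivial system is Floer's classical computation over $\F_2$. In (1b), nonvanishing is precisely what is recorded in Example \ref{ex:loc_sys_quantum_action}, item (iii): the superpotential of $T_{\mathrm{Ch}}$ has critical points, each of which yields a local system with $QH_*\neq 0$. In (1c), I would invoke the standard superpotential of the Clifford torus $T\subset\C P^n$, whose critical-point equations force all monodromies to be equal $(n+1)$-st roots of unity; the trivial system corresponds to the root $1$, which solves these equations over \emph{any} field, giving $QH_*(T;\F)\neq 0$. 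For Part (2), nonvanishing of $QH_*(L;\cQ)$ over $\F_3$ for $\ol\Delta$ and $T_{\mathrm{ex}}$ I would obtain from the explicit disk-count (equivalently superpotential) computations for these specific Lagrangians, while for the Clifford torus it is already contained in Example \ref{ex:loc_sys_quantum_action}, item (v).

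The eigenvalue relations in Part (2) reduce to a single scalar computation. A degree count shows that $\pt\bullet[L]$ must have the form $m\,q[L]$ for some $m\in\F$, whence $e^\F_\pm\bullet[L]=\tfrac12(1\pm m)[L]$, so that $e^\F_\pm\bullet[L]=[L]$ holds precisely when $m=\pm1$. For the Clifford torus this is carried out in Example \ref{ex:loc_sys_quantum_action}, item (v), giving $m=\epsilon\eta$ and hence $e^{\F_3}_\pm\bullet[T]=[T]$ for the systems $\cQ_{+\pm}$. For the antidiagonal and the exotic torus I would compute $m$ by counting the Maslov-index-$2$ disks with boundary on $L$ passing through a generic point of $M$ (equivalently, by reading off the appropriate coefficient of the superpotential), expecting $m=-1$ for $\ol\Delta$, so that $e^{\F_3}_-$ works, and $m=+1$ for $T_{\mathrm{ex}}$, so that $e^{\F_3}_+$ works.

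The main obstacle is the package of geometric computations for $\ol\Delta$ and $T_{\mathrm{ex}}$: both the nonvanishing of their quantum homology over $\F_3$ and the exact value of the scalar $m$ require genuine holomorphic-disk counts (or appeal to previously computed superpotentials), rather than the formal machinery, and one must verify that the specific characteristic $3$ is what simultaneously makes $2$ invertible—so that the idempotents $e^{\F_3}_\pm$ are defined—and makes the relevant counts nonzero with $m$ landing on $\pm1$. Everything else is bookkeeping layered on top of Proposition \ref{prop:Ham_sp_invt_QMs_Lag_sp_invt}.
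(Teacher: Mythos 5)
Your first paragraph reproduces the paper's formal skeleton exactly: specialize Proposition \ref{prop:Ham_sp_invt_QMs_Lag_sp_invt} with $A=[M]$ (Part 1) or $A=e^\F_\pm$ (Part 2), invoke Proposition \ref{prop:EP_sp_invts_Lipschitz_qms} for the quasi-morphism property and Proposition \ref{prop:EP_mu_Lipschitz_qms_descend_Ham} for descent, and reduce everything to checking $QH_*(L;\cQ)\neq 0$ and, in Part 2, the relation $e^\F_\pm\bullet[L]=[L]$. Your verifications for Part 1 (a)--(c) are also in line with the paper (it cites \cite{biran2007quantum} for $\R P^n$ and the Clifford torus superpotential computation of \cite{Biran_Cornea_Lag_Top_Enumerative_Geom} for (c), rather than rederiving them, but the content is the same).

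The gap is in Part 2, cases (a) and (b), which carry the real geometric content, and your treatment of them is both incomplete and methodologically off. You leave the relations $e^{\F_3}_-\bullet[\ol\Delta]=[\ol\Delta]$ and $e^{\F_3}_+\bullet[T_{\mathrm{ex}}]=[T_{\mathrm{ex}}]$ as ``expected'' outcomes of disk counts you do not perform; the paper does not perform them either, but it closes the gap by citation: $\ol\Delta$ is $e^\F_-$-superheavy by \cite{Eliashberg_Polterovich_Sympl_QS_Quadric_Surface}, and \cite{leclercq2018spectral} shows both that superheaviness implies $e^\F_-\bullet[\ol\Delta]=[\ol\Delta]$ and, separately, that $e^\F_+\bullet[T_{\mathrm{ex}}]=[T_{\mathrm{ex}}]$. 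Worse, the method you propose cannot work for $\ol\Delta$: the antidiagonal is a simply connected Lagrangian sphere, so it carries only the trivial rank-one local system and has no superpotential in any meaningful sense; its minimal Maslov number is $4$ (since $\pi_2(M,\ol\Delta)$ is generated by the image of $[S^2\times\pt]$, of Chern number $2$), so there are \emph{no} Maslov-index-$2$ disks at all, and the scalar $m$ in $\pt\bullet[\ol\Delta]=mq[\ol\Delta]$ is a count of Maslov-index-$4$ configurations, not something one reads off a superpotential. Finally, your degree-count claim that $\pt\bullet[L]$ ``must have the form $mq[L]$'' is not forced by degree alone when $L$ is a torus with $N_L=2$: the degree $-2$ part of $QH_*(L;\cQ)$ contains both $t^2H_2$ and $tH_0$, so a priori $\pt\bullet[L]=\alpha t^2[L]+t\beta$ with $\beta\in H_0$. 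The paper eliminates the extra term for the Clifford torus via the K\"unneth splitting in Example \ref{ex:loc_sys_quantum_action}, item (v), and for $T_{\mathrm{ex}}$ again by citing \cite{leclercq2018spectral}; your argument as written does not.
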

\begin{proof}
  The general statement in (i) is a direct consequence of Proposition \ref{prop:Ham_sp_invt_QMs_Lag_sp_invt}, since $c^\F_+$ is the spectral invariant relative to $[M]$ and we always have $[M]\bullet [L] = [L]$. The descent to $\Ham(\C P^n)$ is a consequence of Proposition \ref{prop:EP_mu_Lipschitz_qms_descend_Ham}. Let us now consider the specific cases. For (a) note that $QH_*(\R P^n;\F_2) \neq 0$ \cite{biran2007quantum}. For (b) see Example \ref{ex:loc_sys_quantum_action}, item (iii). For (c) we have $QH_*(T;\F) \neq 0$ for any field $\F$, see \cite[Section 8.1]{Biran_Cornea_Lag_Top_Enumerative_Geom}, where the corresponding superpotential is computed.

  The general statement in (ii) is again a consequence of Propositions \ref{prop:Ham_sp_invt_QMs_Lag_sp_invt}, \ref{prop:EP_mu_Lipschitz_qms_descend_Ham}. Let us now consider the specific cases. For (a), it is known that $\ol\Delta$ is $e^\F_-$-superheavy\footnote{In the terminology of Entov--Polterovich \cite{Entov_Polterovich_Rigid_subsets}.} \cite{Eliashberg_Polterovich_Sympl_QS_Quadric_Surface}, which, as was shown in \cite{leclercq2018spectral}, implies that $e^\F_-\bullet [\ol\Delta] = [\ol \Delta]$. For (b), \emph{ibid.}, it is shown that $e^\F_+\bullet [T_{\mathrm{ex}}] = [T_{\mathrm{ex}}]$. For (c) we need only reference the calculation in Example \ref{ex:loc_sys_quantum_action}, item (v).
\end{proof}

We will also need the following lemma, which is a boundedness result for differences of Lagrangian spectral invariants.
\begin{lemma}\label{lem:boundedness_Lag_sp_invt}Let $(M,\omega)$ be a closed connected symplectic manifold and let $L$ be a Lagrangian with monotonicity constant $\tau$. Let $R,R'$ be arbitrary ground rings, let $\cQ,\cQ'$ be local systems over those rings, and suppose $QH_*(L;\cQ)\neq 0\neq QH_*(L;\cQ')$. Then  for any Hamiltonian $G^0$ on $M$ with $\phi_{G^0}(L)=L$ we have $|\ell_L^{\cQ}(G^0)-\ell_L^{\cQ'}(G^0)|\leq \tau\cdot\dim L$ .
\end{lemma}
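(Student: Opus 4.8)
The plan is to reduce to a transverse situation and then combine the spectrality property with monotonicity. Since $\phi_{G^0}(L)=L$, the time-$1$ map does not meet $L$ transversely, so the strong form of spectrality is not directly available; I first perturb. Fix a $C^2$-small Morse function $f$ on $L$, extend it to a Hamiltonian supported in a Weinstein neighbourhood of $L$, and set $G_\epsilon=G^0+\epsilon f$. For a generic such $f$ and all small $\epsilon>0$ one has $\phi_{G_\epsilon}(L)\pitchfork L$, and the continuity property gives $\ell^\cQ_L(G_\epsilon)\to\ell^\cQ_L(G^0)$, and similarly for $\cQ'$, as $\epsilon\to0$. It therefore suffices to bound $|\ell^\cQ_L(G_\epsilon)-\ell^{\cQ'}_L(G_\epsilon)|$ by $\tau\dim L+O(\epsilon)$ and let $\epsilon\to0$.

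The key algebraic point is that monotonicity $\omega=\tau\cdot\mathsf{m}$ makes recapping act compatibly on action and index: recapping a chord by a disk $w$ changes $\cA_{G_\epsilon:L}$ by $-\tau\,\mathsf{m}([w])$ and the Viterbo--Maslov index by $-\mathsf{m}([w])$. Hence the combination
$$\sigma(\gamma):=\cA_{G_\epsilon:L}(\gamma,u)-\tau\,\mathsf{m}_{G_\epsilon:L}(\gamma,u)$$
is independent of the capping $u$, and so is a function of the chord $\gamma$ alone. By the transverse form of spectrality, each of $\ell^\cQ_L(G_\epsilon)$ and $\ell^{\cQ'}_L(G_\epsilon)$ is realised by a chord--capping pair of index exactly $\dim L$, so each equals $\sigma(\gamma)+\tau\dim L$ for a suitable chord $\gamma$ (the coefficient data enters only through which chord is selected, not through the action functional itself). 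Thus $\ell^\cQ_L(G_\epsilon)-\ell^{\cQ'}_L(G_\epsilon)=\sigma(\gamma_\cQ)-\sigma(\gamma_{\cQ'})$, and the whole problem reduces to bounding the spread of $\sigma$ over those chords that admit a capping of index $\dim L$.

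This is where $\phi_{G^0}(L)=L$ enters decisively. A chord of $G^0$ is determined by its initial point $\gamma(0)=x\in L$, and since $\phi^1_{G^0}(L)=L$ every $x$ yields a chord ending on $L$; as the homotopy class of $\gamma$ is locally constant in $x$, connectedness of $L$ shows that the relevant $G^0$-chords form a single clean Morse--Bott family diffeomorphic to $L$. Perturbing by $\epsilon f$ splits this family into nondegenerate chords sitting over $\Crit(f)$: the action changes by $\epsilon f(p)+O(\epsilon^2)$, while, by the Morse--Bott index formula for clean Lagrangian intersections, the index increases, relative to one common base, by the Morse index $\ind_f(p)\in[0,\dim L]$. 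Consequently $\sigma$ at the chord over $p$ equals $\sigma_0+\epsilon f(p)-\tau\,\ind_f(p)$ for a single constant $\sigma_0$ attached to the family, so all these values—and in particular those which can be recapped to index $\dim L$—lie in an interval of length $\tau\dim L+O(\epsilon)$. This yields the claimed bound after $\epsilon\to0$.

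The main obstacle is exactly the Morse--Bott bookkeeping of the previous paragraph: I must check that even when $\phi^1_{G^0}|_L$ is a nontrivial diffeomorphism the chords still constitute one connected family carrying a well-defined normal index shift, so that the index-$\dim L$ action values share the single base $\sigma_0$ and differ only through the Morse-index term confined to $[0,\dim L]$. Establishing the clean-intersection structure and the resulting index formula (in the spirit of Po\'zniak's theorem) is the real content; the capping-independence of $\sigma$, the existence of a transverse perturbation, and the passage to the limit are then routine consequences of the properties listed above.
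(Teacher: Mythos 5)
Your strategy is the paper's strategy: perturb the clean family of chords by a Morse function, realize both invariants by index-$\dim L$ capped chords via spectrality, and let monotonicity convert an index spread of at most $\dim L$ into an action spread of at most $\tau\dim L$; your capping-independent function $\sigma=\cA-\tau\mathsf{m}$ is an exact repackaging of the paper's decomposition of $\Spec(G:L)$ into windows $(a-\epsilon,a+\epsilon)+\mathsf{A}\cdot\Z$ carrying index sets $\{i,\dots,i+n\}+kN_L$. The difference lies in how the perturbation is made, and that is where your argument has a genuine gap.

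Concretely, the additive perturbation $G_\epsilon=G^0+\epsilon f$ does not split the family over $\Crit(f)$ as you assert. Writing $\phi^t_{G_\epsilon}=\phi^t_{G^0}\circ\phi^t_{K_\epsilon}$ with $K_{\epsilon,t}=\epsilon f\circ\phi^t_{G^0}$, and using $\phi_{G^0}^{-1}(L)=L$, the chords of $G_\epsilon$ correspond to intersections $\phi_{K_\epsilon}(L)\cap L$, i.e.\ (to first order in $\epsilon$) to critical points of the flow-averaged function $x\mapsto\int_0^1 f(\phi^t_{G^0}(x))\,dt$ restricted to $L$. This average need not be Morse, its critical points have no relation to $\Crit(f)$, and so your formulas ``the action changes by $\epsilon f(p)$, the index jumps by $\ind_f(p)$'' are false as stated. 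One can repair this by invoking a Pozniak-type clean-intersection index theorem for an arbitrary small transverse perturbation --- but that theorem is precisely the step you defer as ``the real content'', so the proof is not complete as written. The paper's proof is engineered to avoid ever needing it: it first reparametrizes $G^0$ in time so that $G^0_t\equiv 0$ for $t\in[1-\delta,1]$, and then inserts the (cut-off, time-reparametrized) Hamiltonian coming from $\pi^*f$ into that final time interval. With this concatenation, $\phi_G=\phi_{H'}\circ\phi_{G^0}$ exactly, $\cP_L(G)$ consists exactly of the $G^0$-trajectories ending at $\Crit(f)$ (the chords of $\pi^*f$ from $L$ to $L$ are the constant paths at critical points), the actions lie within $\epsilon$ of the unperturbed ones, and the index statement reduces to the stated convention that a constantly capped chord of a $C^2$-small $\pi^*f$ has Viterbo--Maslov index equal to the Morse index. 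In short: replacing your sum $G^0+\epsilon f$ by the paper's concatenation in time is not a cosmetic choice; it is what turns the Morse--Bott bookkeeping you postpone into something elementary.
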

\begin{proof}
  Let $n=\dim L = \frac 12 \dim M$. Denote $\ell=\ell^\cQ_L$, $\ell'=\ell^{\cQ'}_L$. Let $G^0$ be a Hamiltonian such that $\phi_{G^0}(L) = L$, which we can assume satisfies $G^0_t\equiv 0$ for $t$ near $1$, by suitable time reparametrization. Let $a = \ell(G^0)$.

  Fix $\epsilon > 0$ and let $G$ be a perturbation of $G^0$ obtained as follows. Suppose $\delta > 0$ is such that $G^0_t\equiv 0$ for $t\in[1-\delta,1]$. Let $f \in C^\infty(L)$ be a Morse function, and let $H \in C^\infty(M)$ be obtained by suitably cutting off $\pi^*f$, where $\pi$ is the bundle projection on a Weinstein neighborhood of $L$, so that locally in that neighborhood, $\phi^t_H(L) = \phi_{\pi^*f}^t(L)$ for $t \in [0,1]$. Now time-reparametrize $H$ to obtain a Hamiltonian $H'$ satisfying $H'_t\equiv 0$ for $t \leq 1-\delta$ and replace the zero portion of $G^0_t$ for $t \in [1-\delta,1]$ by $H'$. Let us denote the resulting Hamiltonian by $G$. Suitably rescaling $f$, we can achieve $\|H'\|_{C^0} < \epsilon$, and thus $\|G^0-G\|_{C^0} < \epsilon$.

  We have $|\ell(G) - a| = |\ell(G) - \ell(G^0)| < \epsilon$ by continuity. By spectrality, $a \in \Spec(G^0:L)$, whence
  $$\Spec(G^0:L) = a + \mathsf A\cdot \Z\,,$$
  where $\mathsf A >0$ is the positive generator of the subgroup $\langle \omega,\pi_2(M,L)\rangle \subset \R$. Thanks to our construction of the perturbation $G$, the set $\cP_L(G)$ consists exactly of those curves $\gamma \fc [0,1] \to M$ which satisfy $\gamma(t) = \phi_{G^0}^t(\gamma(0))$ and $\gamma(1) \in \Crit f$. A straightforward action calculation then implies that
  $$\Spec(G:L) \subset (a-\epsilon,a+\epsilon) + \mathsf A \cdot \Z\,.$$
  Let us refer to the capped orbits of $G$ whose actions are contained in an interval $(b,c)$ as orbits of $G$ in the window $(b,c)$. By construction, since $G$ is nondegenerate in the sense that $\phi_G(L)$ intersects $L$ transversely, there exists $i \in \Z$ such that the Viterbo--Maslov indices of the orbits of $G$ in the window $(a-\epsilon,a+\epsilon)$ all lie in the set
  $$\{i,i+1,\dots,i+n\}\,.$$
  By spectrality, $\ell(G)$, being in the interval $(a-\epsilon,a+\epsilon)$, is the action relative to $G$ of one of the orbits in this window, which moreover has index $n$, whence
  $$n \in \{i,\dots,i+n\}\,,$$
  which implies $i \in \{0,\dots,n\}$. Since $L$ is monotone, the orbits of $G$ in the window $(a-\epsilon,a+\epsilon)+k\mathsf A$, where $k \in \Z$, all have indices in the set
  $$\{i,\dots,i+n\} + kN_L\,.$$
  For $|k|>\frac{n}{N_L}$ we have
  $$n \notin \{i,\dots,i+n\} + kN_L\,.$$
  Again by spectrality, $\ell'(G)$ is the action of an orbit of $G$ of index $n$, and from the above it follows that it must lie in a window $(a-\epsilon,a+\epsilon)+k\mathsf A$ where $|k|\leq \frac{n}{N_L}$. It follows that
  $$|\ell'(G) - a| < \epsilon + \frac{n}{N_L}\cdot\mathsf A\,.$$
  Taking $\epsilon \to 0$, we have $\ell'(G) \to \ell'(G^0)$ and therefore
  $$|\ell(G^0) - \ell'(G^0)| \leq \frac{n\mathsf A}{N_L}=\tau\cdot\dim L\,,$$
  where we used the fact that $\tau=\frac{\mathsf A}{N_L}$. The lemma is proved.
\end{proof}

In the following lemma we describe precise conditions under which there is a quasi-morphism on $\Ham(M,\omega)$ with properties asserted in Proposition \ref{prop:qm}.
\begin{lemma}
  Let $(M,\omega)$ be a closed connected symplectic manifold and let $K,L,T\subset M$ be Lagrangians with $K\cap L = \varnothing$. Assume there are fields $\F_K,\F_L$, local systems $\cQ_K, \cQ_K'$ over $\F_K$ and $\cQ_L,\cQ_L'$ over $\F_L$, where $\cQ_K$ is defined on $K$, $\cQ_L$ is defined on $L$ while $\cQ_K',\cQ_L'$ are defined on $T$. Let $\mu_K,\mu_L\fc\Ham(M,\omega) \to \R$ be homogeneous quasi-morphisms which are $1$-Lipschitz with respect to the Hofer distance, such that:
  $$\mu_K=(\ell^{\cQ_K}_K)_\infty=\big(\ell_T^{\cQ_K'}\big)_\infty\qquad \text{and} \qquad \mu_L=(\ell^{\cQ_L}_L)_\infty=\big(\ell_T^{\cQ_L'}\big)_\infty\,.$$
  Then $\nu:=\mu_K-\mu_L$ is a homogeneous quasi-morphism on $\Ham(M,\omega)$ which is $2$-Lipschitz with respect to the Hofer distance and for any $H\in C^\infty(M)$ such that $H|_K\equiv 1$ and $H|_L\equiv 0$ we have $\nu(\phi^t_H)=t$, and moreover $\nu(\psi) = 0$ for all $\psi \in \Ham(M,\omega)$ with $\psi(T) = T$.
\end{lemma}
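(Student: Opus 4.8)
The plan is to establish the three claimed properties in turn, exploiting the fact that each of $\mu_K,\mu_L$ is given by \emph{two} expressions: the representation through $K$ and $L$ serves to compute the value on the flow of $H$, while the representation through $T$ serves to establish vanishing on the stabilizer of $T$. The algebraic part is immediate. Since $\mu_K,\mu_L$ are homogeneous quasi-morphisms, so is $\nu=\mu_K-\mu_L$: homogeneity follows from $\nu(a^n)=n\mu_K(a)-n\mu_L(a)=n\nu(a)$, and the defect is controlled by $D(\nu)\leq D(\mu_K)+D(\mu_L)<\infty$. The Lipschitz bound is equally routine, since for $g,g'\in\Ham(M,\omega)$ we have $|\nu(g)-\nu(g')|\leq|\mu_K(g)-\mu_K(g')|+|\mu_L(g)-\mu_L(g')|\leq 2\,d_{\mathrm H}(g,g')$, each $\mu$ being $1$-Lipschitz. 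Note that the hypothesis $K\cap L=\varnothing$ is exactly what guarantees the existence of a Hamiltonian $H$ with $H|_K\equiv 1$ and $H|_L\equiv 0$.

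For the value on the flow $\phi^t_H$ I would use the expressions $\mu_K=(\ell^{\cQ_K}_K)_\infty$ and $\mu_L=(\ell^{\cQ_L}_L)_\infty$. The key observation is that the difference $\ell^{\cQ_K}_K-\ell^{\cQ_L}_L$, viewed on $\wt\Ham(M,\omega)$, is independent of the normalization of the representing Hamiltonian: by the normalization property both invariants are shifted by the same time-dependent constant, which depends only on the Hamiltonian and not on the Lagrangian or the local system, so the shift cancels in the difference. Representing $\wt\phi^n_H$ by the autonomous Hamiltonian $nH$ and invoking Lagrangian control on each Lagrangian gives $\ell^{\cQ_K}_K(nH)=n$ (since $nH|_K\equiv n$) and $\ell^{\cQ_L}_L(nH)=0$ (since $nH|_L\equiv 0$), so their difference equals $n$. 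Since $\{\wt\phi^t_H\}_{t\in\R}$ is an abelian one-parameter subgroup and $\nu$ is a continuous homogeneous quasi-morphism, $t\mapsto\nu(\wt\phi^t_H)$ is additive, hence linear; it therefore suffices to compute
$$\nu(\wt\phi^1_H)=\lim_{n\to\infty}\tfrac 1n\big(\ell^{\cQ_K}_K(\wt\phi^n_H)-\ell^{\cQ_L}_L(\wt\phi^n_H)\big)=\lim_{n\to\infty}\tfrac nn=1\,.$$
Thus $\nu(\wt\phi^t_H)=t$, and since $\nu$ descends to $\Ham(M,\omega)$ this yields $\nu(\phi^t_H)=t$.

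For the vanishing on the stabilizer of $T$ I would switch to the expressions $\mu_K=(\ell^{\cQ_K'}_T)_\infty$ and $\mu_L=(\ell^{\cQ_L'}_T)_\infty$. Let $\psi(T)=T$ and pick a lift $\wt\psi\in\wt\Ham(M,\omega)$, so that
$$\nu(\wt\psi)=\lim_{n\to\infty}\tfrac 1n\big(\ell^{\cQ_K'}_T(\wt\psi^n)-\ell^{\cQ_L'}_T(\wt\psi^n)\big)\,.$$
For each $n$ the element $\wt\psi^n$ is represented by a Hamiltonian whose time-$1$ map is $\psi^n$, and $\psi^n(T)=T$ because $\psi(T)=T$; since the difference of the two Lagrangian invariants is again normalization-independent, Lemma \ref{lem:boundedness_Lag_sp_invt} applies to every such representative and gives $|\ell^{\cQ_K'}_T(\wt\psi^n)-\ell^{\cQ_L'}_T(\wt\psi^n)|\leq\tau\cdot\dim T$, a bound uniform in $n$. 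Dividing by $n$ and letting $n\to\infty$ forces $\nu(\wt\psi)=0$, hence $\nu(\psi)=0$ after descending to $\Ham(M,\omega)$.

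The main obstacle I anticipate is the bookkeeping around normalization and the passage to iterates rather than any deep geometric input. One must verify carefully that the \emph{difference} of two Lagrangian spectral invariants is genuinely independent of the choice of (possibly unnormalized) generating Hamiltonian, and that the boundedness estimate of Lemma \ref{lem:boundedness_Lag_sp_invt} holds uniformly across all powers $\wt\psi^n$. The latter is precisely the point where the stabilizer hypothesis is used: because $\psi^n$ continues to preserve $T$, the bound $\tau\cdot\dim T$ is independent of $n$, so homogenization washes it out. The whole argument hinges on the two alternative descriptions of $\mu_K,\mu_L$ being available simultaneously, which is what allows the same quasi-morphism to detect $K$ and $L$ through Lagrangian control while remaining bounded on the $T$-stabilizer.
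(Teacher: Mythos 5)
Your proposal is correct and follows essentially the same approach as the paper: Lagrangian control plus the normalization/invariance properties yield $\nu(\phi^t_H)=t$, and Lemma \ref{lem:boundedness_Lag_sp_invt} applied to (normalized representatives of) powers of a stabilizing $\psi$, combined with homogeneity, forces $\nu$ to vanish on the stabilizer of $T$. The differences are cosmetic: you extend from integer powers to real $t$ via additivity on the one-parameter subgroup and continuity, where the paper computes $\ell^{\cQ_K}_K(\wt\phi_{tH})=t-t\langle H\rangle$ and $\ell^{\cQ_L}_L(\wt\phi_{tH})=-t\langle H\rangle$ directly for all real $t$ and then homogenizes, and you spell out the homogenization limit on the stabilizer where the paper invokes that a bounded homogeneous quasi-morphism vanishes.
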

\begin{proof}
  It is obvious from the definition of $\nu$ that it is a homogeneous quasi-morphism which is $2$-Lipschitz with respect to the Hofer distance. Let $H$ be as in the statement of the lemma.

  Let us abbreviate $\ell_K:=\ell^{\cQ_K}_K$, $\ell_L:=\ell^{\cQ_L}_L$. From the assumptions it follows that $\ell_K$ is a real-valued function, thus in particular $QH_*(K;\cQ_K)\neq 0$, which implies, by Lagrangian control, that for all $t \in \R$ we have
  $$\ell_K(tH) = t\,.$$
  Putting
  $$\langle H \rangle :=\frac{\int_M H\omega^n}{\int_M\omega^n}\,,$$
  where $n=\frac12\dim M$, we see that $tH-t\langle H\rangle$ is normalized, and thus by the invariance and normalization properties we have
  $$\ell_K(\wt\phi_{tH})=\ell_K(tH-t\langle H\rangle) = \ell_K(tH) - t\langle H\rangle = t-t\langle H \rangle\,.$$
  Similarly, since $\ell_L$ is real-valued, $QH_*(L;\cQ_L)\neq 0$, which implies that $\ell_L(tH) = 0$, again by Lagrangian control. Analogously to the last argument, it follows that
  $$\ell_L(\wt\phi_{tH}) = \ell_L(tH) - t\langle H\rangle = -t\langle H\rangle\,.$$
  We thus have
  $$\mu_K(\phi_{tH})=(\ell_K)_\infty(\wt\phi_{tH})=\lim_{k\to\infty}\frac{\ell_K(\wt\phi_{tH}^k)}{k}=\lim_{k\to\infty}\frac{\ell_K(\wt\phi_{ktH})}{k} = t-t\langle H\rangle\,,$$
  and similarly
  $$\mu_L(\phi_{tH})=(\ell_L)_\infty(\wt\phi_{tH})=-t\langle H\rangle\,.$$
  Thus
  $$\nu(\phi^t_H)=\nu(\phi_{tH}) = \mu_K(\phi_{tH}) - \mu_L(\phi_{tH}) = t\,,$$
  as claimed.
  For the last assertion, let $G$ be a Hamiltonian such that $\phi_G(T)=T$. It follows from Lemma \ref{lem:boundedness_Lag_sp_invt} that $|\ell^{\cQ_K'}_T(G) - \ell^{\cQ_L'}_T(G)| \leq n\tau$, where $\tau$ is the monotonicity constant of $T$. Thus the restriction of $\nu=\mu_K-\mu_L = (\ell^{\cQ_K'}_T)_\infty-(\ell^{\cQ_L'}_T)_\infty$ to $\{\phi\in\Ham(M,\omega)\,|\,\phi(T)=T\}$ is bounded in absolute value by $n\tau$, and, since it is a homogeneous quasi-morphism, it therefore vanishes, which proves the last assertion of the lemma.
\end{proof}

With all this preparation, we can now prove Proposition \ref{prop:qm}.
\begin{proof}[Proof of Proposition \ref{prop:qm}]
  The existence of $\nu$ is a consequence of the preceding lemma, combined with the following observations.
  \begin{enumerate}
    \item For $M=\C P^n$, take $K=\R P^n$, $L=T_{\mathrm{Ch}}$, $T=$ the Clifford torus as the Lagrangians, and note that $\R P^n$ and $T_{\mathrm{Ch}}$ are indeed disjoint \cite{Kawamoto_Shelukhin_Spectral_Invts_Over_Integers}. Now let $\F_K=\F_2$, $\F_L=\C$, let $\cQ_K = \cQ_K'$ be the trivial local system, $\cQ_L$ be one of the local systems for which $QH_*(T_{\mathrm{Ch}};\cQ_L)\neq 0$ (Example \ref{ex:loc_sys_quantum_action}, item (iii)), and $\cQ_L'$ be the trivial local system. The quasi-morphisms are $\mu_K=\mu^{\F_2}$ and $\mu_L=\mu^{\C}$. Then the assumptions of the lemma hold as a direct consequence of Corollary \ref{cor:Ham_qm_coincides_homog_Lag_sp_invt}.
    \item In case $M=S^2\times S^2$, let $K=T_{\mathrm{ex}}$, $L = \ol\Delta$, $T=$ the Clifford torus be the Lagrangians, where we note that $\ol\Delta$ and $T_{\mathrm{ex}}$ are obviously disjoint. The fields are $\F_K=\F_L=\F_3$, while the local systems are: $\cQ_K=\cQ_L=$ trivial, $\cQ_K'=\cQ_{++}$, $\cQ_L' = \cQ_{+-}$, see Example \ref{ex:loc_sys_quantum_action}, item (v). The quasi-morphisms are $\mu_K=\mu_+^{\F_3}$ and $\mu_L=\mu_-^{\F_3}$. Again, the assumptions of the lemma hold as a consequence of Corollary \ref{cor:Ham_qm_coincides_homog_Lag_sp_invt}.
  \end{enumerate}
  For the Hamiltonian we can take any $H \in C^\infty(M,[0,1])$ with $H|_K\equiv 1$ and $H|_L\equiv 0$.
\end{proof}

\bibliographystyle{alpha}
\bibliography{bibfile}

\noindent
\begin{tabular}{l}
University of Haifa \\
Department of Mathematics \\
Faculty of Natural Sciences \\
3498838, Haifa, Israel \\
\&\\
MI SANU \\
Kneza Mihaila 36 \\
Belgrade 11001\\
Serbia\\
{\em E-mail:}  \texttt{frol.zapolsky@gmail.com}
\end{tabular}

\end{document}